\documentclass[hidelinks,onefignum,onetabnum]{siamart251104}

\usepackage{stmaryrd}
  
\newcommand{\mc}[1]{\multicolumn{1}{c}{#1}}

\usepackage{amsfonts}
\usepackage{graphicx}
\usepackage{epstopdf}
\ifpdf
  \DeclareGraphicsExtensions{.eps,.pdf,.png,.jpg}
\else
  \DeclareGraphicsExtensions{.eps}
\fi

\newsiamremark{remark}{Remark}
\newsiamremark{hypothesis}{Hypothesis}
\crefname{hypothesis}{Hypothesis}{Hypotheses}
\newsiamthm{claim}{Claim}
\newsiamremark{fact}{Fact}
\crefname{fact}{Fact}{Facts}
\newsiamremark{problem}{Problem}

\headers{Monolithic Newton-Krylov Solver for Space-Time FEM}{
  Nils Margenberg, Markus Bause}

\title{
A Monolithic \texorpdfstring{$\lowercase{\boldsymbol{hp}}$}{hp} Space-Time Multigrid Preconditioned Newton-Krylov Solver for Space-Time FEM applied to the Incompressible Navier-Stokes Equations\thanks{Submitted to the editors DATE.}}

\author{
  Nils Margenberg\thanks{
    University of Magdeburg,
    Institute for Analysis and Numerics,
    Universit\"atsplatz 2,
    39104 Magdeburg,
    Germany,
    \texttt{nils.margenberg@ovgu.de}}
    \and
    Markus Bause\thanks{Helmut Schmidt University,
    Faculty of Mechanical and Civil Engineering,
    Holstenhofweg 85,
    22043 Hamburg,
    Germany}
    }

\usepackage{amsopn}
\usepackage{amsmath,amssymb,mathtools}

\providecommand{\R}{\mathbb{R}}

\usepackage{tikz}
\usetikzlibrary{3d,decorations.pathreplacing,shapes, positioning, fit, quotes,angles, fit, calc, patterns, external}
\usetikzlibrary{spy,shadows}
\tikzset{
  spy using overlaysshadow/.style={
    spy scope={#1,
         every spy on node/.style={
            circle,
            fill, fill opacity=0.25, text opacity=1
             },
         every spy in node/.style={
                 circle, circular drop shadow,
                 fill=white, draw, cap=round
            }
        }
    }
}
\usepackage{pgfplots}
\usepgfplotslibrary{groupplots,colorbrewer,fillbetween}
\pgfplotsset{compat=newest}%
\pgfplotsset{ colormap/Set1-4, cycle multiindex* list={ mark
    list*\nextlist Set1-4\nextlist }, every axis/.append style = {thick},%
}%
\pgfkeys{/pgf/number format/.cd,1000 sep={\,}}
\usepgfplotslibrary{colormaps}
\pgfplotsset{ every mark/.append style={4pt},tick style = {thick,black}}%
\usepackage{diagbox}
\newcommand{\N}{\mathbb{N}} %
\renewcommand{\d}{\mkern3mu\text{d}} %
\usepackage{MnSymbol}
\usepackage[normalem]{ulem}
\usepackage{multirow}
\ifpdf
\hypersetup{
  pdftitle={A Monolithic Newton–Krylov Solver with hp Space–Time Multigrid Preconditioning for Tensor-Product Space–Time Finite-Element Discretizations of the Incompressible Navier–Stokes Equations},
  pdfauthor={Margenberg, N.}
}
\fi

\usepackage{tikz}
\usetikzlibrary{3d,decorations.pathreplacing,shapes, positioning, fit, quotes,angles, fit, calc, patterns}
\usetikzlibrary{spy,shadows}
\tikzset{
  spy using overlaysshadow/.style={
    spy scope={#1,
      every spy on node/.style={
        circle,
        fill, fill opacity=0.25, text opacity=1
      },
      every spy in node/.style={
        circle, circular drop shadow,
        fill=white, draw, cap=round
      }
    }
  }
}
\usepackage{pgfplots}
\usepgfplotslibrary{groupplots,colorbrewer,fillbetween}
\pgfplotsset{compat=newest}%
\pgfplotsset{ colormap/Set1-4, cycle multiindex* list={ mark
    list*\nextlist Set1-4\nextlist }, every axis/.append style = {thick},%
}%
\pgfkeys{/pgf/number format/.cd,1000 sep={\,}}
\usepgfplotslibrary{colormaps}
\pgfplotsset{ every mark/.append style={4pt},tick style = {thick,black}}%
\usepackage{booktabs}
\usepackage{enumitem}
\usepackage{siunitx}
\usepackage{fp}
\newcommand{\eval}[2][]{%
  \ifthenelse{\equal{#1}{}}{%
    \FPeval{\result}{#2}%
    \num[round-precision=4]{\result}%
  }{%
    \FPeval{\result}{#2}%
    \num[scientific-notation=false,round-mode=places,print-zero-exponent=false,tight-spacing=true,round-precision=#1]{\result}%
  }%
}
\usepackage[noend,algoruled,norelsize,linesnumbered,lined,commentsnumbered,algo2e]{algorithm2e}
\usepackage{subcaption}
\definecolor{myblue}{RGB}{0 83 139}
\definecolor{myred}{RGB}{114 16 69}
\definecolor{mygreen}{RGB}{0 94 0}

\makeatletter
\newcommand{\customlabel}[2]{%
  \protected@write \@auxout {}{\string \newlabel {#1}{{#2}{\thepage}{#2}{#1}{}} }%
  \hypertarget{#1}{#2}%
}
\makeatother

\begin{document}

  \maketitle
\begin{abstract}
We present a monolithic \(hp\) space-time multigrid method (\(hp\)-STMG) for
tensor-product space-time finite element discretizations of the incompressible
Navier--Stokes equations. We employ mapped inf-sup stable pairs
\(\mathbb Q_{r+1}/\mathbb P_{r}^{\mathrm{disc}}\) in space and a slabwise
discontinuous Galerkin DG\((k)\) discretization in time. The resulting fully
coupled nonlinear systems are solved by Newton--GMRES preconditioned with
\(hp\)-STMG, combining geometric coarsening in space with polynomial coarsening
in space and time. Our main contribution is an \(hp\)-robust and practically
efficient extension of space-time multigrid to Navier--Stokes: matrix-free
operator evaluation is retained via column-wise, state-dependent spatial
kernels; the nonlinear convective term is handled by a reduced, order-preserving time quadrature. Robustness is ensured
by an inexact space-time Vanka smoother based on patch models with single time point evaluation. The method is implemented
in the matrix-free multigrid framework of \texttt{deal.II} and demonstrates
\(h\)- and \(p\)-robust convergence with robust solver performance across a
range of Reynolds numbers, as well as high throughput in large-scale
MPI-parallel experiments with more than \(10^{12}\) degrees of freedom.
\end{abstract}
  \begin{keywords}
    Space-time finite elements, space-time multigrid, monolithic multigrid,
    matrix-free, higher-order finite elements, high-performance computing
  \end{keywords}
  \begin{MSCcodes}
    65M60, 65M55, 65F10, 65Y05
  \end{MSCcodes}

  \section{Introduction}
  Accurate and efficient simulation of incompressible flows remains challenging,
particularly in the high-resolution, high-order regime. Space-time finite element
methods (STFEMs) provide a natural framework for parallelism in space and time by
treating time as an additional coordinate and enabling a unified variational
discretization. This work transfers our space-time multigrid methodology for the
instationary Stokes equations~\cite{MaMuBa25} to the nonlinear Navier--Stokes case. Two key ingredients make this extension feasible in a matrix-free setting:
(i) a \emph{tailored reduced time quadrature} for nonlinear space-time terms to
reduce the cost per residual/Jacobian application while preserving the overall
temporal order, and (ii) an \emph{inexact} space-time Vanka smoother based on a
coefficient patch model evaluated at a single time point, enabling reuse
of local factorizations. We prove that lower-order quadrature for the nonlinear
triple products preserves the discretization order, and that the inexact
smoother remains uniformly close to the exact Vanka smoother. Numerically, this
controlled inexactness does not degrade Newton--Krylov convergence or multigrid
robustness.

We consider slab-wise tensor-product space-time discretizations. In space, we use
mapped, inf-sup stable $\mathbb Q_{r+1}/\mathbb P_{r}^{\mathrm{disc}}$ pairs with
$r\in\mathbb N$, and in time a discontinuous Galerkin method DG$(k)$ of order
$k\in\mathbb N_0$. {Continuous-in-time Galerkin discretizations are not pursued
here, since a satisfactory definition of a continuous pressure trajectory
remains an open issue; cf.~\cite{anselmannOptimalorderPressureApproximation2025}.
The difficulty is that the initial pressure value is defined implicitly by the
Navier--Stokes equations.} The resulting nonlinear space-time system is solved by
Newton--GMRES iterations preconditioned by an \(hp\) space-time multigrid method
(\(hp\) STMG), combining geometric and polynomial coarsening in space and time. {Here, $p$-multigrid refers to coarsening in polynomial degree,
while $h$-multigrid (geometric multigrid) refers to mesh coarsening.
In our implementation, $h$-coarsening is restricted to space due to the
time-marching realization; temporal $h$-coarsening is deferred to
future work.} Our implementation is based on the matrix-free multigrid framework in
\texttt{deal.II}~\cite{africa_dealii_2024,kronbichlerGenericInterfaceParallel2012,munchEfficientDistributedMatrixfree2023,fehnHybridMultigridMethods2020}.
The source code is available~\cite{margenberg_monolithic_2026}.

Matrix-free \emph{monolithic} multigrid methods for Stokes-type systems are a prototypical building block for high-order incompressible flow
solvers \cite{kohlTextbookEfficiencyMassively2022,jodlbauerMatrixfreeMonolithicMultigrid2024}.
Monolithic $ph$-multigrid preconditioners have been studied for high-order
stationary Stokes~\cite{voroninMonolithicMultigridPreconditioners2024}.
Matrix-free Newton--Krylov solvers preconditioned by monolithic multigrid have
also been demonstrated for stabilized incompressible Navier--Stokes
\cite{prietosaavedraMatrixFreeStabilizedSolver2024}, and high-order space-time
DG formulations for incompressible Navier--Stokes are available
\cite{rhebergenCockburnVegtSTDG2013}. Slab-wise tensor-product space-time
discretizations with Newton solvers have been investigated
\cite{rothTensorProductSpaceTimeGoalOriented2023}. The present work targets
robust, scalable \(hp\) multigrid preconditioning for nonlinear tensor-product
space-time finite element Navier--Stokes systems.

  Parallel-in-time integration methods address the sequential bottleneck of
  classical time stepping; see~\cite{ganderTimeParallelTime2024}. Fully implicit
  Runge--Kutta (FIRK) methods offer a complementary route to high-order time
  integration but lead to large coupled stage systems, closely related to
  tensor-product STFEM.\@ Stage-parallel and transformed solvers for FIRK
  systems in DG-based flow simulations are developed
  in~\cite{paznerPerssonStageParallelIRK2017}, and Vanka-based monolithic
  multigrid for implicit Runge--Kutta discretizations of incompressible flow is
  studied in~\cite{abu-labdeh_monolithic_2023}. Further developments for FIRK
  systems with nonlinearities and multilevel block solvers are addressed in~\cite{southworthKrzysikPaznerFIRKII2022,munchStageParallelFullyImplicit2023}.

  We retain the STFEM viewpoint and show that
  the space-time multigrid methodology developed
  in~\cite{MaMuBa25,margenbergSpaceTimeMultigridMethod2024a} can be made practical
  for Navier--Stokes via temporal underintegration and an inexact space-time Vanka
  smoother. The choice of the smoother is motivated by the proven effectiveness of
  Vanka-type smoothers in fluid mechanics~\cite{ahmedAssessmentSolversSaddle2018,anselmannGeometricMultigridMethod2023,MaMuBa25},
  coupled multiphysics~\cite{failerParallelNewtonMultigrid2021,anselmannEnergyefficientGMRESMultigrid2024} and acoustic
  wave equations~\cite{margenbergSpaceTimeMultigridMethod2024a}.

  This paper is organized as follows. \Cref{sec:stfem} introduces the continuous
  problem and the space-time finite element discretization. The
  resulting nonlinear algebraic system is derived in \Cref{sec:AlgSys}.
  \Cref{sec:nonlinear-solver} presents the \(hp\)-STMG preconditioned Newton--GMRES
  solver used throughout. Numerical experiments are reported in
  \Cref{sec:experiments}. \Cref{sec:conclusions} concludes with an evaluation of
  the results and an outlook.

  \section{\label{sec:stfem}Continuous and discrete problem}
  \subsection{Continuous problem}
  We consider the incompressible, nonstationary\linebreak[4] Navier--Stokes system
  \begin{subequations}\label{eq:NSE}
    \begin{alignat}{3}
      \label{eq:NSE_1}
      \partial_t \boldsymbol{v} + (\boldsymbol v\cdot \boldsymbol\nabla) \boldsymbol v - \nu  \boldsymbol\Delta \boldsymbol{v} + \boldsymbol\nabla p & = \boldsymbol{f}  && \quad \text{in }\; \Omega \times (0,\,T)\,,\\
      \label{eq:NSE_2}
      \boldsymbol\nabla \cdot \boldsymbol{v}  & = 0 && \quad \text{in }\; \Omega \times (0,\,T)\,,\\
      \label{eq:NSE_3}
      \boldsymbol{v}(0) & = \boldsymbol{v}_0 && \quad \text{in }\; \Omega\,,\\
      \label{eq:NSE_4}
      \boldsymbol{v}  & = \boldsymbol{g}_D  && \quad \text{on } \; \Gamma_D \times (0,\,T)\,,\\
      \label{eq:NSE_5}
      (\nu \boldsymbol\nabla \boldsymbol v - p \boldsymbol I) \boldsymbol n & = \boldsymbol 0 && \quad \text{on } \; \Gamma_N \times (0,\,T)\,.
    \end{alignat}
  \end{subequations}
  Here, $\Omega\subset\mathbb R^d$, $d\in\{2,3\}$, is a bounded Lipschitz domain and $T>0$ the final time. We split
  $\Gamma\coloneqq\partial\Omega$ into $\Gamma_D$ and $\Gamma_N$, i.e.\
  $\Gamma=\overline{\Gamma_D}\cup\overline{\Gamma_N}$, with
  $|\Gamma_D|_{d-1}>0$, $|\Gamma_N|_{d-1}>0$, and $\Gamma_D\cap\Gamma_N=\emptyset$.
  The outward unit normal of $\Gamma$ is $\boldsymbol n=\boldsymbol n(\boldsymbol x)$ and
  $\boldsymbol I$ denotes the identity matrix. The unknowns in~\eqref{eq:NSE} are
  the velocity field $\boldsymbol v$ and pressure $p$, while $\boldsymbol f$,
  $\boldsymbol g_D$, and $\boldsymbol v_0$ are given, sufficiently smooth data.
  The kinematic viscosity satisfies $\nu>0$. {We assume sufficient regularity
  of solutions to~\eqref{eq:NSE}, in particular up to $t=0$, to justify higher-order space-time approximations.}

  We use standard notation. $H^m(\Omega)$ denotes the Sobolev space of $L^2(\Omega)$ functions with (weak) derivatives up to order $m$ in $L^2(\Omega)$. The $L^2(\Omega)$ inner product (and its vector- and tensor-valued counterparts) is denoted by $\langle\cdot,\cdot\rangle$, with boundary pairing $\langle\cdot,\cdot\rangle_\Gamma$ and $\Gamma \in \{\partial \Omega, \Gamma_D\}$. Further, we use $\partial_n \boldsymbol w\coloneq ( \boldsymbol\nabla \boldsymbol
  w) \boldsymbol n$. We define
  \[
  \boldsymbol V\coloneq H^1(\Omega)^d,\quad Q\coloneq L^2(\Omega),\quad \boldsymbol V^{\operatorname{div}}\coloneq\{\boldsymbol v\in\boldsymbol V\mid \langle \boldsymbol\nabla \cdot \boldsymbol v, q\rangle=0\ \forall q\in Q\}.
  \]

\begin{remark}[Non mixed-type boundary condition]
If Dirchlet boundary conditions are prescribed on $\partial \Omega$ only, i.e.\ $\partial\Omega = \Gamma_D$, the pressure variable is determined uniquely up to a constant $c\in \R$ only. In this case, the pressure space is chosen as $Q(\Omega)\coloneq L^2(\Omega)$ with \(
L^2_0(\Omega)\coloneq\{ q\in L^2(\Omega)\mid \int_\Omega q\d  x=0\}
\) instead of $L^2(\Omega)$ such that its uniqueness is ensured. Further, we refer to \cite{MaMuBa25} for the modifications that have to be made for $\Gamma = \Gamma_D$ in the construction of the multigrid method.
\end{remark}

For the weak formulation of~\eqref{eq:NSE}, let $\boldsymbol X \coloneqq \boldsymbol V \times Q$. The semilinear form $A: \boldsymbol X\times \boldsymbol X \to \R$ with convective form of the first-order term in~\eqref{eq:NSE_1} is given by
\begin{equation}
  \label{eq:ns-form}
  \begin{aligned}
  A(\boldsymbol u)(\boldsymbol w) & \coloneq
  \langle (\boldsymbol v \cdot \nabla) \boldsymbol v, \boldsymbol z \rangle +  \langle\nu \boldsymbol\nabla \boldsymbol v - p\boldsymbol I, \nabla \boldsymbol z\rangle + \langle \boldsymbol\nabla \cdot \boldsymbol v,\, q\rangle
  \end{aligned}
\end{equation}
for $\boldsymbol u, \boldsymbol w \in \boldsymbol X$, with $ \boldsymbol u = (\boldsymbol v,p)$ and $ \boldsymbol w = (\boldsymbol z,q)$. Well-definedness of $A$ on $\boldsymbol X\times \boldsymbol X $ is ensured. For our unified tensor product approach, we rewrite the convective term by using that $(\boldsymbol v \cdot \nabla) \boldsymbol v = \nabla \cdot (\boldsymbol v \otimes \boldsymbol v)$, if $\nabla \cdot \boldsymbol v = 0$. The tensor product of two vectors $\boldsymbol a, \boldsymbol b \in \R^d$ is defined by $\boldsymbol a \otimes \boldsymbol b = \boldsymbol a  \boldsymbol b^\top$.  This semilinear form $A^{tp}: \boldsymbol X\times \boldsymbol X \to \R$ with the convective term in divergence form is then given by
\begin{equation}
  \label{eq:ns-form-tp}
  \begin{aligned}
    A^{tp}(\boldsymbol u)(\boldsymbol w) & \coloneq
    \langle \nu \boldsymbol\nabla \boldsymbol v - p\boldsymbol I -\boldsymbol v\otimes\boldsymbol v,\nabla \boldsymbol z\rangle + \langle \boldsymbol\nabla \cdot \boldsymbol v,\, q\rangle
  \end{aligned}
\end{equation}
for $\boldsymbol u, \boldsymbol w \in \boldsymbol X$. Well-definedness of~\eqref{eq:ns-form-tp} is ensured. The semilinear forms comprise volume integrals only. Boundary integrals from integration by parts are added below.


\subsection{Space-time finite element discretization}

The construction principle for the space-time finite element spaces explores the so-called tensor product of Hilbert spaces. It can be rooted on the concept of multi-linear forms of Hilbert spaces. The product form of the space-time finite element spaces is strongly exploited in the construction and efficient implementation of our multigrid preconditioner.

\paragraph*{Time mesh and spaces} Let $I\coloneq(0,T]$ be partioned into $N$ subintervals $I_n\coloneq(t_{n-1},t_n]$, $n=1,\dots,N$, where $\tau = \max\{\tau_n \mid N=1,\ldots, N\}$. We denote the time mesh by $\mathcal M_\tau\coloneq\{I_1,\dots,I_N\}$. For $k\in\mathbb N_0$, let $\mathbb P_k(J;\mathbb R)$ be the space of piece-wise polynomials with maximum degree $k$ on $J\subset I$. We put
\begin{equation}
  \label{eq:DefYtau}
  Y_\tau^k(I)\coloneq\{ w_\tau:I\to\mathbb R\mid
  (w_\tau)_{|I_n}\in\mathbb P_k(I_n;\mathbb R)\;\forall I_n\in\mathcal
  M_\tau\}\,.
\end{equation}

\paragraph*{Spatial mesh and spaces} Let $\mathcal T_h$ be a shape-regular
quadrilateral/hexahedral mesh of $\Omega$ with mesh size $h>0$. On $K\in\mathcal T_h$ we put, for fixed $r\in \N$,
\begin{equation}\label{Def:VKQK}
  \boldsymbol V^{r+1}(K)\coloneq (\mathbb Q_{r+1})^d\circ\boldsymbol T_K^{-1}\,,\qquad
  Q^r(K)\coloneq \mathbb P_r^{\mathrm{disc}}\circ\boldsymbol T_K^{-1}\,,
\end{equation}
where $\boldsymbol T_K$ is the standard multilinear map from the reference element to $K$. We employ the mapped variant of $\mathbb P_r^{\mathrm{disc}}$ for geometric consistency on curved/non-affine meshes and improved conditioning of the algebraic systems. Using~\eqref{Def:VKQK}, we define
\begin{subequations}\label{Def:VhQh}
  \begin{align}
    \boldsymbol V_h^{r+1}(\Omega) &\coloneq \{\boldsymbol v_h\in \boldsymbol V: (\boldsymbol v_h)_{|K}\in\boldsymbol V_{r+1}(K)\ \forall K\in\mathcal T_h\}\,,\\
    Q_h^{r}(\Omega) &\coloneq \{ q_h\in Q: (q_h)_{|K}\in Q_r(K)\ \forall K\in\mathcal T_h\}\,.
  \end{align}
\end{subequations}
The subspace of $\boldsymbol V_h$ of discretely divergence-free functions is
\begin{equation}\label{Def:Vdiv}
  \boldsymbol V_h^{\operatorname{div}}(\Omega)\coloneq\{\boldsymbol
  v_h\in\boldsymbol V_h^{r+1}(\Omega)\mid \langle\boldsymbol\nabla \cdot \boldsymbol
  v_h,\,q_h\rangle=0\ \forall q_h\in Q_h^{r}(\Omega)\}\,.
\end{equation}

\paragraph*{Space-time tensor product spaces} The global fully discrete spaces are the
algebraic tensor products
\begin{equation}
  \label{eq:GDS}
  \boldsymbol H_{\tau h}^{\boldsymbol v} \coloneq Y_\tau^k(I)\otimes \boldsymbol V_h^{r+1}(\Omega)\,,\qquad
  H_{\tau h}^{p} \coloneq Y_\tau^k(I)\otimes Q_h^{r}(\Omega)\,,\qquad \boldsymbol{X}_{\tau h}^{kr} \coloneqq \boldsymbol H_{\tau h}^{\boldsymbol v} \times H_{\tau h}^{p} \,.
\end{equation}

\begin{remark}[Tensor products and Bochner spaces]
  The algebraic tensor product $Y_\tau(I)\otimes V_h(\Omega)$ is the span of
  separable functions $f\otimes g:(t,\boldsymbol x)\mapsto f(t)g(\boldsymbol x)$
  with $f\in Y_\tau(I)$ and $g\in V_h(\Omega)$; see
  \cite[Section~1.2.3]{picard_partial_2011}. The Hilbert spaces in~\eqref{eq:GDS}
  are isometric to the Bochner spaces $Y_\tau^k(I;\boldsymbol V_h^{r+1}(\Omega))$
  and $Y_\tau^k(I;Q_h^{r}(\Omega))$ of piece-wise polynomial functions with values in $\boldsymbol V_h^{r+1}$ and $Q_h^{r}$, respectively; cf.\
  \cite[Prop.~1.2.28]{picard_partial_2011}.
\end{remark}

\paragraph*{Right limits and jumps in time} For any piece-wise smooth $w:I\to B$ with respect to $\mathcal M_\tau$ (for instance, $w\in \boldsymbol H_{\tau h}^{\boldsymbol v}$), we define the right limit $w^+(t_n)\coloneq\lim_{t\to _n+0}w(t)$, $0\le n<N$, and the jump $\llbracket w\rrbracket_n\coloneq w^+(t_n)-w(t_n)$.

\paragraph*{Nitsche imposition on $\Gamma_D$} Instead of enforcing Dirichlet boundary conditions by the definition of the solution space, we apply Nitsche's method that imposes Dirichlet boundary conditions in a weak form. This offers appreciable advantages in the implementation of our framework. For this, let the positive and negative parts of $y\in \R$ be denoted by $y^\oplus \coloneqq \frac{1}{2}(|y|+y)$ and $y^\ominus \coloneqq \frac{1}{2}(|y|-y)$.

Firstly, we address the case in that the convective term is written as $(\boldsymbol v\cdot \nabla)  \boldsymbol v$. This is done in order to develop our approach for the tensor product from of the convection term. For brevity, we put $\boldsymbol X_h^r \coloneqq \boldsymbol V_h^{r+1}(\Omega)\times Q_h^r$. We let $\gamma_1 >0$ and $\gamma_2 >0$ denote two algorithmic parameters. Their choice is addressed below. The semilinear form $B_\gamma : \boldsymbol V \times \boldsymbol X_h \to \R $ is defined, for $v\in \boldsymbol V$ and $\boldsymbol w_h \in \boldsymbol X_h$, by
\begin{subequations}
\label{eq:def-b-gam}
\begin{alignat}{2}
\label{eq:def-b-gam-1}
B_\gamma (\boldsymbol v, \boldsymbol w_h) & \coloneqq B^{c} (\boldsymbol v, \boldsymbol w_h) + B^{s} (\boldsymbol v, \boldsymbol w_h) + B_\gamma^{r} (\boldsymbol v, \boldsymbol w_h)\,,\\
\label{eq:def-b-gam-2}
B^{c} (\boldsymbol v, \boldsymbol w_h) & \coloneqq  - \langle (\boldsymbol v \cdot \boldsymbol n)^\ominus \boldsymbol v , \boldsymbol z_h\rangle_{\Gamma_D}\,,\\
\label{eq:def-b-gam-2.5}
B^{s} (\boldsymbol v, \boldsymbol w_h) & \coloneqq - \langle \boldsymbol v, (\nu \nabla \boldsymbol z_h + q_h \boldsymbol I)\boldsymbol n \rangle_{\Gamma_D}\\
\label{eq:def-b-gam-3}
B_\gamma^r (\boldsymbol v, \boldsymbol w_h) & \coloneqq \nu\,  {\gamma_1}\,{h_{\Gamma_D}^{-1}} \langle \boldsymbol v, \boldsymbol z_h\rangle_{\Gamma_D} + {\gamma_2}\,{h_{\Gamma_D}^{-1}}\langle \boldsymbol v \cdot \boldsymbol n, \boldsymbol z_h\cdot \boldsymbol n \rangle_{\Gamma_D} \,.
\end{alignat}
\end{subequations}
Along with~\eqref{eq:ns-form}, we let $A_\gamma : \boldsymbol X_h \times \boldsymbol X_h \to \R$, for $\boldsymbol v_h, \boldsymbol w_h \in \boldsymbol X_h$, be given by
\begin{equation}
\label{eq:def-a-gam}
A_\gamma (\boldsymbol u_h)(\boldsymbol w_h) \coloneqq A (\boldsymbol u_h)(\boldsymbol w_h) - \langle (\nu \nabla \boldsymbol v_h - p_h \boldsymbol I)\boldsymbol n, \boldsymbol z_h \rangle_{\Gamma_D} + B_\gamma (\boldsymbol v_h,\boldsymbol w_h)\,.
\end{equation}
The terms~\eqref{eq:def-b-gam} and~\eqref{eq:def-a-gam} have the following explanations. The second term on the right hand side of~\eqref{eq:def-a-gam} is due to the application of integration by parts and substracts the natural boundary condition on $\Gamma_D$. The term in~\eqref{eq:def-b-gam-2} reflects the inflow boundary conditions. The term in~\eqref{eq:def-b-gam-2.5} is used to preserve the symmetry properties of the continuous system. The last two terms in~\eqref{eq:def-b-gam-3} are penalizations to ensure the stability of the discrete system. Together, the boundary pairings in~\eqref{eq:def-b-gam} model the effective Dirichlet conditions in the three different flow regimes of viscous effects $\boldsymbol v = \boldsymbol g$, convective behavior $(\boldsymbol v \cdot \boldsymbol n)^\ominus \boldsymbol v  = (\boldsymbol v \cdot \boldsymbol n)^\ominus \boldsymbol g$ and inviscid limit ($\boldsymbol v \cdot \boldsymbol n = \boldsymbol g \cdot \boldsymbol n$).

Next, we consider the divergence form $\nabla \cdot (\boldsymbol v \otimes \boldsymbol v)$ of the convective term. Along with~\eqref{eq:ns-form}, we let $A_\gamma ^{tp}: \boldsymbol X_h \times \boldsymbol X_h \to \R$, for $\boldsymbol v_h, \boldsymbol w_h \in \boldsymbol X_h$, be given by
 \begin{equation}
  \label{eq:def-a-gam-tp}
  \begin{aligned}
  A_\gamma^{tp} (\boldsymbol u_h)(\boldsymbol w_h) & \coloneqq A^{tp} (\boldsymbol u_h)(\boldsymbol w_h) - \langle (\nu \nabla \boldsymbol v_h - p_h \boldsymbol I)\boldsymbol n, \boldsymbol z_h \rangle_{\Gamma_D}\\
  & \quad  + \langle (\boldsymbol v_h \cdot \boldsymbol n) \boldsymbol v_h, \boldsymbol z_h \rangle_\Gamma + B_\gamma (\boldsymbol v_h, \boldsymbol w_h)\,.
  \end{aligned}
 \end{equation}
 The third term on the right-hand side of~\eqref{eq:def-a-gam-tp} is due to the application of integration by parts to the tensor product form of the convective term and ensures consistency. In~\eqref{eq:def-a-gam-tp}, we have with $y = y^\oplus - y^\ominus$ that
\begin{align*}
\langle (\boldsymbol v_h \cdot \boldsymbol n) \boldsymbol v_h, \boldsymbol z_h \rangle_\Gamma  & -  \langle (\boldsymbol v \cdot \boldsymbol n)^\ominus \boldsymbol v , \boldsymbol z_h\rangle_{\Gamma_D}\\
&= \langle (\boldsymbol v \cdot \boldsymbol n)^\oplus\boldsymbol v , \boldsymbol z_h\rangle_{\Gamma_D} - 2 \langle (\boldsymbol v \cdot \boldsymbol n)^\ominus \boldsymbol v , \boldsymbol z_h\rangle_{\Gamma_D} + \langle (\boldsymbol v \cdot \boldsymbol n) \boldsymbol v , \boldsymbol z_h\rangle_{\Gamma_N}\,.
\end{align*}
In flow problems that are of interest in practice, the Dirichlet boundary portion $\Gamma_D$ models an inflow boundary or fixed walls with no-slip condition. Then, $(\boldsymbol v \cdot \boldsymbol n)^\oplus =0$ is satisfied on $\Gamma_D$. Similarly, on the portion $\Gamma_N$ the property $(\boldsymbol v \cdot \boldsymbol n)^\minus =0$ is satisfied. Together, this implies that
\begin{equation}
\label{eq:bttp}
  \langle (\boldsymbol v_h \cdot \boldsymbol n) \boldsymbol v_h, \boldsymbol z_h \rangle_\Gamma  -  \langle (\boldsymbol v \cdot \boldsymbol n)^\ominus \boldsymbol v , \boldsymbol z_h\rangle_{\Gamma_D} = - 2 \langle (\boldsymbol v \cdot \boldsymbol n)^\ominus \boldsymbol v , \boldsymbol z_h\rangle_{\Gamma_D} + \langle (\boldsymbol v \cdot \boldsymbol n)^\oplus \boldsymbol v , \boldsymbol z_h\rangle_{\Gamma_N}\,.
\end{equation}
The terms on the right-hand side of~\eqref{eq:bttp} add further nonlinearities to the variational formulation. In the case of low Reynolds number flow, these quantities are small and might be neglected in computations. Further, for $\boldsymbol z_h = \boldsymbol v_h$ their nonnegativity is ensured, such that stability properties are not perturbed or weakened by the terms.

\paragraph*{Fully discrete problem} We are now in a position to define our tensor-product space-time finite element approximation of the Navier--Stokes system~\eqref{eq:NSE}. For the time discretization the discontinuous Galerkin method is applied. 

\begin{problem}[Discrete space-time variational problem]
  \label{Prob:DVP}
  Let $\boldsymbol f \in L^2(I;\boldsymbol H^{-1}(\Omega))$ and $\boldsymbol g_D \in L^2(I;\boldsymbol H^{1/2}(\Gamma_D))$
  be given. Let $\boldsymbol v_{0,h}\in\boldsymbol V_h^{\operatorname{div}}(\Omega)$ denote an approximation of $\boldsymbol v_0\in\boldsymbol
  V^{\operatorname{div}}(\Omega)$. Find $\boldsymbol u_{\tau h}=(\boldsymbol v_{\tau h},p_{\tau,h})\in \boldsymbol{X}_{\tau h}^{kr}$, with $\boldsymbol v_{\tau h}(0) \coloneqq \boldsymbol v_{0,h}$, such that for all $\boldsymbol w_{\tau h}= (\boldsymbol z_{\tau h},q_{\tau,h})\in \boldsymbol X_{\tau h}^{kr}$,
  \begin{equation}
    \label{eq:DNSE}
    \begin{aligned}
      \sum_{n=1}^{N} \int_{t_{n-1}}^{t_n}  & \langle \partial_t \boldsymbol v_{\tau h},\,\boldsymbol z_{\tau h}\rangle + A_\gamma^{tp}(\boldsymbol u_{\tau h},\boldsymbol w_{\tau h})\d  t + \sum_{n=0}^{N-1} \langle \llbracket \boldsymbol v_{\tau h}\rrbracket_n,\boldsymbol z_{\tau h}^+(t_n)\rangle \\
      &= \sum_{n=1}^{N} \int_{t_{n-1}}^{t_n}  \langle \boldsymbol f,\boldsymbol z_{\tau h}\rangle\d  t
      + \sum_{n=1}^{N} \int_{t_{n-1}}^{t_n}  B_\gamma (\boldsymbol g,\boldsymbol w_{\tau h})\d  t\,.
    \end{aligned}
  \end{equation}
\end{problem}

\begin{remark}[Preservation of tensor-product structure]
  In~\eqref{eq:DNSE}, all linear space-time terms can be written as
  algebraic tensor products of purely time and space dependent contributions; cf.~\Cref{sec:AlgSys}.
  In contrast to the linear parts, the convective terms depend on the
  velocity itself. This destroys the strict separability of time and space. Nevertheless,
  the associated spatial operators can be built as algebraic tensor products, depending on
  the velocity as time dependent coefficient function. The contribution of the convective terms and their Jacobian
  matrices remain amenable to matrix-free, sum-factorized evaluation, cf.~\Cref{sec:mf}.
  For the construction of the matrix based smoother, we construct a separable surrogate
  by evaluating the convection field in the midpoint of $I_n$, which restores a
  Kronecker product structure inside the smoother without altering
  the outer FGMRES operator; cf.~\Cref{subsec:VankaSmth}.
\end{remark}

\section{\label{sec:AlgSys}Algebraic system}

Here, we derive the algebraic form of Problem~\ref{Prob:DVP} by exploiting the space-time tensor product structure~\eqref{eq:GDS} of the discrete spaces. The tensor product form is preserved in the algebraic system. This simplifies the assembly of the finite element matrices by factorizing into space and time integrals. The  matrix-free framework becomes efficient. The treatment of the nonlinear convective term is more involved, but it can still be captured in this tensor product approach. An embedding of the Newton linearized, tensor product structured algebraic system into an \(hp\) multigrid preconditioning concept is developed in \Cref{sec:mg-framework}.

\subsection{Preliminaries}\label{Sec:Prem}
For the evaluation of the time integrals in~\eqref{eq:DNSE}, we employ the right-sided $(k+1)$-point
Gauss-Radau quadrature on $I_n=(t_{n-1},t_n]$,
\begin{equation}
  \label{eq:GF}
  Q_n(w) \coloneq \frac{\tau_n}{2}\sum_{\mu=1}^{k+1} \hat\omega_\mu\, w(t_n^{\mu}) \approx \int_{I_n} w(t)\d t\,,
\end{equation}
where $t_n^{\mu}=T_n(\hat t_\mu)$ with $T_n(\hat t)\coloneq(t_{n-1}+t_n)/2+ (\tau_n/2)\hat t$ and
$\{\hat t_\mu,\hat\omega_\mu\}_{\mu=1}^{k+1}$ are the Gauss--Radau points and corresponding weights on $[-1,1]$.
The rule~\eqref{eq:GF} is exact for all $w\in\mathbb P_{2k}(I_n;\mathbb R)$ and
$t_n^{k+1}=t_n$. Therefore, it is not exact for the nonlinear terms in \eqref{eq:DNSE}. The quadrature error will be analyzed below in Lemma.~\ref{lem:GR-div}.

For the temporal finite element space~\eqref{eq:DefYtau}, we use the local Lagrange basis associated with the
Gauss--Radau nodes and supported on single subintervals $I_n$,
\begin{equation}
  \label{eq:BasYk}
  \begin{aligned}
    Y_\tau^k(I) = \operatorname{span}\big\{& \varphi_{n}^a \in L^2(I) \mid (\varphi^a_{n})_{|I_b}\in \mathbb P_k(I_b;\mathbb R),\ b=1,\ldots,N\,,\; \operatorname{supp}\, \varphi^a_{n}\subset \overline I_n\,, \\
    & \varphi^a_{n} (t_{n}^{\mu}) = \delta_{a,\mu},\ \mu =1,\ldots, k+1\,, \; a=1,\ldots,k+1,\ n=1,\ldots, N \big \}\,,
  \end{aligned}
\end{equation}
with the Kronecker symbol $\delta_{a,\mu}$. For the spatial discretization, we introduce the global
finite element bases associated with the finite element spaces~\eqref{Def:VhQh} by
\begin{equation}
  \label{eq:BasVhQh}
    \boldsymbol V_h^{r+1}(\Omega) = \operatorname{span}\{\boldsymbol\chi^{\boldsymbol v}_m\}_{m=1}^{M^{\boldsymbol v}}\,, \qquad
    Q_h^{r}(\Omega) = \operatorname{span}\{\chi^{p}_m\}_{m=1}^{M^p}\,.
\end{equation}
On each slab $\Omega \times I_n$, the tuple $(\boldsymbol v_{\tau h},p_{\tau h})\in \boldsymbol H_{\tau h}^{\boldsymbol v}\times H_{\tau h}^p$ has the tensor-product form
\begin{equation}
  \label{eq:Repvp}
    \boldsymbol v_{\tau h}{}_{|I_n} =  \sum_{a=1}^{k+1}\sum_{m=1}^{M^{\boldsymbol v}} v_n^{a,m}\,\varphi_n^a\,\boldsymbol\chi_m^{\boldsymbol v}\,,	\quad p_{\tau h}{}_{|I_n} = \sum_{a=1}^{k+1}\sum_{m=1}^{M^{p}} p_n^{a,m}\,\varphi_n^a\,\chi_m^{p}
\end{equation}
with coefficients $v_n^{a,m}\in\mathbb R^d$ and $p_n^{a,m}\in\mathbb R$, for $a=1,\ldots,k+1$ and $m=1,\ldots,M$, with $M\in \{M^{\boldsymbol v},M^p\}$. On each slab, we assemble the vectors of unknowns by
\begin{subequations}
\label{eq:DefSubvec_0}
\begin{alignat}{4}
\label{eq:DefSubvec_1}
  \boldsymbol V_n^{a} & \coloneq (v_n^{a,1},\ldots,v_n^{a,M^{\boldsymbol v}})^{\top}\in\mathbb R^{M^{\boldsymbol v}} \,,
  & \boldsymbol P_n^{a} & \coloneq (p_n^{a,1},\ldots,p_n^{a,M^p})^{\top}\in\mathbb R^{M^p}\,, \\
\label{eq:DefSubvec_2}
  \boldsymbol V_{n}  & \coloneq\big(\boldsymbol V_{n}^{1},\ldots ,\boldsymbol V_{n}^{k+1}\big)^{\top}\in \mathbb R^{(k+1)M^{\boldsymbol v}}\,,
  & \quad \boldsymbol P_{n}  & \coloneq\big(\boldsymbol P_{n}^{1},\ldots,\boldsymbol P_{n}^{k+1}\big)^{\top}\in \mathbb R^{(k+1)M^p}\,.
\end{alignat}
\end{subequations}

\paragraph{Assembly of the bilinear and linear terms in~\eqref{eq:DNSE}}

The matrices assembled from the bilinear contributions in~\eqref{eq:DNSE} for the tensor product spaces~\eqref{eq:GDS} and their bases in~\eqref{eq:BasYk} and~\eqref{eq:BasVhQh}, respectively, are defined explicitly in Appendix~\ref{App:Assembly}. The algebraic counterpart of the linear forms in~\eqref{eq:DNSE} are summarized in Appendix~\ref{App:Assembly} as well.

\paragraph{Assembly of the nonlinear convective terms in~\eqref{eq:DNSE}}
For the tensor product framework, the treatment of the nonlinear convective terms in~\eqref{eq:DNSE} contributing to $A_\gamma^{tp}$ and $B_\gamma$ is more involved. For the application of Newton's method to the nonlinear system, the Jacobian of the nonlinear contributions is needed further. This is derived in the sequel. We let $c:\boldsymbol V_h^{r+1}\times \boldsymbol V_h^{r+1} \to \R$ be defined by
\begin{equation}
\label{eq:Defc}
c(\boldsymbol v_h)(\boldsymbol z_h) \coloneqq - \langle \boldsymbol v_h \otimes \boldsymbol v_h , \nabla \boldsymbol z_h \rangle + \langle (\boldsymbol v_h  \cdot \boldsymbol n) \boldsymbol v_h, \boldsymbol z_h \rangle_\Gamma
\end{equation}
for $\boldsymbol v_h, \boldsymbol z_h\in \boldsymbol V_h^{r+1}$. Its Gateaux derivative at $\boldsymbol v_h$ in the direction $\boldsymbol{\hat v_h}\in \boldsymbol V_h^{r+1}$ is
\begin{equation}
\label{eq:Defcp}
\begin{aligned}
c'(\boldsymbol v_h)(\boldsymbol{\hat v_h},\boldsymbol z_h) & = - \langle \boldsymbol{\hat v_h} \otimes \boldsymbol v_h , \nabla \boldsymbol z_h \rangle - \langle  \boldsymbol v_h \otimes \boldsymbol{\hat v_h} , \nabla \boldsymbol z_h \rangle  \\
& \quad + \langle (\boldsymbol{\hat v}_h  \cdot \boldsymbol n) \boldsymbol v_h, \boldsymbol z_h \rangle_\Gamma + \langle (\boldsymbol{v}_h  \cdot \boldsymbol n) \boldsymbol{\hat v}_h, \boldsymbol z_h \rangle_\Gamma\,, \quad \forall \boldsymbol z_h\in \boldsymbol V_h^{r+1}\,.
\end{aligned}
\end{equation}

For some $\boldsymbol v_h \in \boldsymbol V_h^{r+1}$, let $\boldsymbol V\in\mathbb R^{M^{\boldsymbol v}}$ denote the coefficient vector with respect to the representation of $\boldsymbol v_h$ in the basis $\{\boldsymbol \chi_i^{\boldsymbol v}\}_{m=1}^{}$ of $\boldsymbol V_h^{r+1}$; cf.~\eqref{eq:BasVhQh}. By means of~\eqref{eq:Defc}, we let $\boldsymbol H: \R^{M^{\boldsymbol v}} \to \R^{M^{\boldsymbol v}}$, $\boldsymbol V \to H(\boldsymbol V)$, with $\boldsymbol v_h = \sum_{m=1}^{M^{\boldsymbol v}} V_m \boldsymbol \chi_m^{\boldsymbol v}\in \boldsymbol V_h^{r+1}$, be given by
\begin{equation}
\label{eq:DefH}
(\boldsymbol H(\boldsymbol V))_m \coloneqq  c(\boldsymbol v_h)(\boldsymbol \chi_m^{\boldsymbol v})
\end{equation}
for $m =1,\ldots, M^{\boldsymbol v}$. For some time slab vector $\boldsymbol V_n=(\boldsymbol V_n^1,\ldots , \boldsymbol V_n^{k+1})^\top \in \R^{(k+1)\cdot M^{\boldsymbol{v}}}$, we let $\boldsymbol H: \R^{(k+1)\cdot M^{\boldsymbol v}} \to \R^{(k+1)\cdot M^{\boldsymbol v}}$, $\boldsymbol V \to H(\boldsymbol V)$ be given by
\begin{equation}
\label{eq:DefHslab}
\boldsymbol H_n(\boldsymbol V_n)\coloneqq (\boldsymbol H(\boldsymbol V_n^1),\ldots, \boldsymbol H(\boldsymbol V_n^{k+1}))^\top\,.
\end{equation}

Using~\eqref{eq:GF} for the basis of~\eqref{eq:BasYk}, it follows that
\begin{equation}
\label{eq:QuadNL}
\sum_{b,c=1}^{k+1} \int_{I_n} \varphi_n^b(t) \varphi_n^c(t) \varphi_n^a(t) \d t \approx  \frac{\tau_n}{2}\sum_{\mu=1}^{k+1} \hat \omega_\mu \varphi_n^b(t_n^\mu) \varphi_n^c(t_n^\mu) \varphi_n^a(t_n^\mu)\,,
\end{equation}
for $a,b,c\in \{1,\ldots, k+1\}$. We note that~\eqref{eq:QuadNL} does not amount to an identity since~\eqref{eq:BasYk} is exact for all polynomials of order less or equal than $2k$ only, and $\varphi_n^b(t) \varphi_n^c(t) \varphi_n^a \in \mathbb P_{3k}(I_n)$. The quadrature error in~\eqref{eq:QuadNL} is analyzed below in \Cref{sec:gr-nonlinear}. From~\eqref{eq:QuadNL}, we conclude that, with a quadrature error $\boldsymbol{E}_\text{c}^{\text{GR}}$,
\begin{equation}
\label{eq:AlgFconv}
\left(\left(\int_{I_n} c(\boldsymbol v_{\tau,h})(\varphi_n^a \boldsymbol \chi_m^{\boldsymbol v})\, \mathrm d t\right)_{m=1}^{M^{\boldsymbol v}}\right)_{a=1}^{k+1} = (\boldsymbol M_n^\tau \otimes \boldsymbol I) \boldsymbol H_n(\boldsymbol V_n) + \boldsymbol{E}^\text{GR}_{\text{c}}\,,
\end{equation}
where $\boldsymbol M_n^\tau\in \R^{k+1,k+1}$ is diagonal and defined in~\eqref{eq:DefKM}. The tensor (or right Kronecker) product  $\boldsymbol A\otimes \boldsymbol B$ of $\boldsymbol A\in \R^{r,r}$ and $\boldsymbol B\in \R^{s,s}$, for $r,s\in \N$, is given by
\begin{equation}
  \label{Eq:DefKr}
  \boldsymbol A \otimes \boldsymbol B \coloneqq \left(a_{ij}\boldsymbol B\right)_{i,j=1}^{r}\,.
\end{equation}

\begin{remark}
\label{Rem:AssNConvBd}
The contributions to~\eqref{eq:DNSE} that arise from the boundary pairings with the convective term in \eqref{eq:def-b-gam-2} and \eqref{eq:def-a-gam-tp} are assembled similarly to~\eqref{eq:AlgFconv} as $(\boldsymbol M_n^\tau \otimes \boldsymbol I) \boldsymbol N_{\Gamma_D}^{\boldsymbol v,c}(\boldsymbol V_n)$.
\end{remark}

For solving the nonlinear algebraic system we apply Newtons's method. For this, the Jacobian of the nonlinear mapping $\boldsymbol H_n$ in~\eqref{eq:DefHslab} is needed. Recalling~\eqref{eq:Defcp} or, alternatively, building the derivative of the mapping $\boldsymbol H$ in~\eqref{eq:DefH} with respect to the unknown vector $\boldsymbol V$, we get the mapping $\boldsymbol H': \R^{M^{\boldsymbol v}} \to \R^{M^{\boldsymbol v},M^{\boldsymbol v}}$, $\boldsymbol V \mapsto \boldsymbol{H}'(\boldsymbol V)$ with
\begin{equation}
\label{eq:DefHp}
(\boldsymbol{H}'(\boldsymbol V))_{ms} = \frac{\partial H_m}{\partial V_s} = c'(\boldsymbol v_h)(\boldsymbol \chi_s^{\boldsymbol v},\boldsymbol \chi_m^{\boldsymbol v})
\end{equation}
for $m,s=1,\ldots, M^{\boldsymbol v}$. The Jacobian of $\boldsymbol H_n$ in~\eqref{eq:DefHslab} is then computed as $\boldsymbol H_n': \R^{(k+1)\cdot M^{\boldsymbol v}} \to \R^{(k+1)\cdot M^{\boldsymbol v},(k+1)\cdot M^{\boldsymbol v}}$, $\boldsymbol V_n \mapsto \boldsymbol{H}_n'(\boldsymbol V)$ with the block diagonal structure
\begin{equation}
\label{eq:DefHnp}
(\boldsymbol{H}_n'(\boldsymbol V_n)) = \operatorname{diag}(\boldsymbol{H}'(\boldsymbol V_n^1),\ldots ,\boldsymbol{H}'(\boldsymbol V_n^{k+1}))\,.
\end{equation}

\subsection{Temporal quadrature approximation of the nonlinear term}
\label{sec:gr-nonlinear}

Here we analyze briefly the consistency error $\boldsymbol E_c^{\text{GR}}$ in~\eqref{eq:AlgFconv}. It occurs if the convective term in~\eqref{eq:DNSE}, that is a polynomial of order $3k$ in time, is integrated in time by the Gauss--Radau quadrature formula~\eqref{eq:GF}, that is exact for all polynomials of order less or equal then $2k$ only. We present an error estimate for the time integration of the convective term that holds under suitable stability assumptions about the fully discrete solution. Rigorous error estimates for the overall scheme are beyond the scope of interest in this paper. The convergence behavior of our scheme is illustrated numerically in \Cref{sec:experiments}. This further illustrates that the approach is methodologically sound.

\begin{lemma}[Gauss--Radau quadrature error for divergence-form of convection]\label{lem:GR-div}
  For $k\in \N_0$, suppose that $\boldsymbol u_{\tau h} \in Y_\tau^k(I) \otimes \boldsymbol V_h^{\operatorname{div}}$ satisfies, for $l=0,\ldots,k$,
  \begin{equation}
  \label{eq:Reguwth}
  \max_{n=1,\ldots,N} \big\{ \| \partial_t^l \boldsymbol u_{\tau h} \|_{L^\infty(I_n;\boldsymbol{H}^1(\Omega))} \| + \| \widetilde \Delta_h  \partial_t^l \boldsymbol u_{\tau h} \|_{L^\infty(I_n;\boldsymbol{L}^2(\Omega))}\big\} \leq A_k\,,
  \end{equation}
where $\widetilde \Delta_h$ denotes the discrete Stokes operator (cf.~\cite[p.~297]{HeywoodRannacher1982}), and $A_k$ is independent of the mesh sizes $\tau_n$ and $h$. For $\boldsymbol w_{\tau h} \in Y_\tau^k(I) \otimes \boldsymbol V_h^{r+1}$ let
  \begin{equation*}
    \mathcal C_n(\boldsymbol u_{\tau h})(\boldsymbol w_{\tau h}) \coloneq \int_{I_n} c(\boldsymbol u_{\tau h}(t))(\boldsymbol w_{\tau h}(t))\, \mathrm dt\,.
  \end{equation*}
  For the $(k+1)$-point Gauss--Radau formula~\eqref{eq:GF}, we put
  \begin{equation*}
    \mathcal C^{\operatorname{GR}}_n(\boldsymbol u_{\tau h})(\boldsymbol w_{\tau h}) \coloneq \sum_{\mu=1}^{k+1} w_{n,\mu}\, c(\boldsymbol u_{\tau h}(t_n^\mu))(\boldsymbol w_{\tau h}(t_n^\mu))\,.
  \end{equation*}
  There exists a constant $B_k=B_k(A_k)$ such that there holds that
  \begin{equation}
  \label{eq:QErr0}
  \bigg| \sum_{n=1}^N \bigg(\mathcal C_n(\boldsymbol u_{\tau h})(\boldsymbol w_{\tau h})-\mathcal C_n^{\mathrm{GR}}(\boldsymbol u_{\tau h})(\boldsymbol w_{\tau h})\bigg) \bigg| \leq B_k \tau^{2k+1} \|\boldsymbol w_{\tau h} \|_{L^2(I;\boldsymbol H^1(\Omega))} \,.
  \end{equation}
\end{lemma}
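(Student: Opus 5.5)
The plan is to estimate the quadrature error slab by slab using the Peano kernel / Bramble--Hilbert representation of the Gauss--Radau error, and then sum over the slabs with the Cauchy--Schwarz inequality. On $I_n$, set $g_n(t)\coloneq c(\boldsymbol u_{\tau h}(t))(\boldsymbol w_{\tau h}(t))$. This is a polynomial of degree at most $3k$ in $t$, since $c$ is quadratic in its first argument and linear in its second. The $(k+1)$-point rule~\eqref{eq:GF} is exact on $\mathbb P_{2k}(I_n)$. Hence the standard error formula gives
\[
\Big|\int_{I_n} g_n\,\mathrm dt - Q_n(g_n)\Big| \le C\,\tau_n^{2k+1}\int_{I_n}|\partial_t^{2k+1} g_n|\,\mathrm dt ,
\]
and the right-hand side vanishes for $k=0$, because then $g_n$ is constant. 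The task is therefore to bound $\partial_t^{2k+1}g_n$.

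First, I expand $\partial_t^{2k+1} g_n$ by the Leibniz rule for the trilinear structure $-\langle \boldsymbol u\otimes\boldsymbol u,\nabla\boldsymbol w\rangle+\langle(\boldsymbol u\cdot\boldsymbol n)\boldsymbol u,\boldsymbol w\rangle_\Gamma$. This gives a sum of terms with multi-indices $(l_1,l_2,l_3)$, where $l_1+l_2+l_3=2k+1$ and $l_i\le k$ because each factor is a polynomial of degree $k$ in time. It follows that $l_3\ge 1$, so every term carries a time derivative of $\boldsymbol w_{\tau h}$. I remove it with an inverse estimate in time on $I_n$, namely $\|\partial_t^{l_3}\boldsymbol w_{\tau h}\|_{L^1(I_n;\boldsymbol H^1)}\le C\tau_n^{-l_3}\|\boldsymbol w_{\tau h}\|_{L^1(I_n;\boldsymbol H^1)}$.

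The derivative losses must not destroy the $\tau^{2k+1}$ factor. I will handle this by not estimating the $(2k+1)$-th derivative crudely. Instead, I use the exactness of the rule on $\mathbb P_{2k}$ more cleverly. I write $g_n$ as the product of $\Pi(\boldsymbol u\otimes\boldsymbol u)$ with $\nabla\boldsymbol w$, where the first factor has degree $2k$ and $\Pi$ is projection onto $\mathbb P_{k}$ in time, or alternatively the Lagrange interpolant at the Radau nodes. The rule integrates $p\,q$ exactly for $p,q\in\mathbb P_k$. The quadrature agrees on $\boldsymbol u\otimes\boldsymbol u$ and its interpolant $I_k(\boldsymbol u\otimes\boldsymbol u)$ at the nodes. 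The error therefore equals
\[
\int_{I_n}\big\langle(\boldsymbol u\otimes\boldsymbol u)-I_k(\boldsymbol u\otimes\boldsymbol u),\nabla\boldsymbol w\big\rangle\,\mathrm dt
\]
(with the boundary term treated analogously). Its size is controlled by $\tau_n^{k+1}\|\partial_t^{k+1}(\boldsymbol u\otimes\boldsymbol u)\|$. A sharper bound, via orthogonality of the interpolation error to $\mathbb P_{k-1}$ or the standard superconvergence of Radau nodes, would recover additional powers.

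The bounded quantities entering these estimates are $\partial_t^{l}\boldsymbol u_{\tau h}$ with $l\le k$. The products $\partial_t^{l_1}\boldsymbol u\otimes\partial_t^{l_2}\boldsymbol u$ must be placed in $L^2(\Omega)$ against $\nabla\boldsymbol w$, and in $L^2(\Gamma)$ for the boundary term. For this I use $\|\boldsymbol a\otimes\boldsymbol b\|_{L^2}\le\|\boldsymbol a\|_{L^\infty}\|\boldsymbol b\|_{L^2}$. The factor $\|\boldsymbol a\|_{L^\infty}$ is bounded by the discrete Sobolev/Heywood--Rannacher estimate $\|\boldsymbol a\|_{L^\infty}\le C(\|\boldsymbol a\|_{H^1}+\|\widetilde\Delta_h\boldsymbol a\|_{L^2})$ for discretely divergence-free $\boldsymbol a$. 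This is exactly where assumption~\eqref{eq:Reguwth} and $\boldsymbol u_{\tau h}\in Y_\tau^k\otimes\boldsymbol V_h^{\operatorname{div}}$ are used. The trace term is handled with the trace inequality and the same $H^1$ and $L^\infty$ bounds. This yields a slab bound $C(A_k)\tau_n^{2k+1}\tau_n^{1/2}\|\boldsymbol w\|_{L^2(I_n;\boldsymbol H^1)}$. I then sum over $n$ with Cauchy--Schwarz, using $\sum_n\tau_n\le T$, and absorb the constants into $B_k$.

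The main obstacle is bookkeeping the powers of $\tau_n$. The derivatives of $\boldsymbol u_{\tau h}$ of order above $k$ vanish, and the derivatives falling on $\boldsymbol w_{\tau h}$ cost inverse powers of $\tau_n$. Getting the full $\tau^{2k+1}$, rather than something like $\tau^{k+1}$, depends on the choice of splitting. I would try first the variant that interpolates only $\boldsymbol u\otimes\boldsymbol u$ at the Radau nodes and then exploits that the Radau rule with $k+1$ nodes is exact up to degree $2k$ when integrating the interpolation error against $\boldsymbol w\in\mathbb P_k$. If the powers do not close, I would fall back on~\eqref{eq:Reguwth} to bound the highest derivatives uniformly and accept the estimate with the constant $B_k$ depending on $A_k$.
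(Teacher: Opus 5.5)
The gap is the slab bound $C(A_k)\,\tau_n^{2k+1}\tau_n^{1/2}\|\boldsymbol w_{\tau h}\|_{L^2(I_n;\boldsymbol H^1(\Omega))}$. Nothing in your argument produces it, and no choice of splitting can, because the bound is false. Carried out honestly, your first route is the paper's own. In the Leibniz expansion the worst term has $l_3=k$, and the inverse estimate turns $\tau_n^{2k+1}\|\partial_t^{k}\boldsymbol w_{\tau h}\|_{L^1(I_n;\boldsymbol H^1(\Omega))}$ into $\tau_n^{k+3/2}\|\boldsymbol w_{\tau h}\|_{L^2(I_n;\boldsymbol H^1(\Omega))}$, which is exactly \eqref{eq:QErr4}. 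Your interpolation identity $E_n=\int_{I_n}\langle \boldsymbol U-\mathcal I_k\boldsymbol U,\nabla\boldsymbol w_{\tau h}\rangle\,\mathrm dt$ (plus its boundary analogue) is correct; here $\boldsymbol U=\boldsymbol u_{\tau h}\otimes\boldsymbol u_{\tau h}$ and $\mathcal I_k$ is the Radau-node interpolant. It gives the same $\tau_n^{k+3/2}$ more directly, without inverse estimates. Cauchy--Schwarz over the slabs then yields $(\sum_n\tau_n^{2k+3})^{1/2}\le T^{1/2}\tau^{k+1}$, not $\tau^{2k+1}$. The extra powers you hope to get from orthogonality are not there. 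We have $\boldsymbol U-\mathcal I_k\boldsymbol U=\omega_n\boldsymbol Q$ with $\boldsymbol Q\in\mathbb P_{k-1}$, where $\omega_n$ is the nodal polynomial of the right Radau points, a multiple of the mapped $L_{k+1}-L_k$ ($L_j$ the Legendre polynomials on $[-1,1]$). This polynomial is orthogonal to $\mathbb P_{k-1}$ only. But $\nabla\boldsymbol w_{\tau h}$ may carry an arbitrary degree-$k$ component in time, and $\int_{-1}^1(L_{k+1}-L_k)L_k\,\mathrm d\hat t\neq0$. Falling back on \eqref{eq:Reguwth} does not help either, because the loss sits on the test function, about which nothing is assumed.

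In fact $\tau^{k+1}$ is sharp, so \eqref{eq:QErr0} fails for every $k\ge1$. Take $k=1$, fix $h$, and use uniform steps $\tau_n=\tau$. Let $\boldsymbol u_{\tau h}(t)=t\,\boldsymbol\phi$ with $\boldsymbol\phi\in\boldsymbol V_h^{\operatorname{div}}$, so \eqref{eq:Reguwth} holds with $A_1$ independent of $\tau$. Let $\boldsymbol w_{\tau h}(t)=T_n^{-1}(t)\,\boldsymbol z$ on each $I_n$, with $\boldsymbol z\in\boldsymbol V_h^{r+1}$ chosen so that $c(\boldsymbol\phi)(\boldsymbol z)\neq0$. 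Then $c(\boldsymbol u_{\tau h}(t))(\boldsymbol w_{\tau h}(t))=t^2\,T_n^{-1}(t)\,c(\boldsymbol\phi)(\boldsymbol z)$. The two-point rule (nodes $-\tfrac13,1$, weights $\tfrac32,\tfrac12$) integrates all of this exactly except the part $\tfrac{\tau^2}{4}\hat t^{3}$, whose reference error is $-\tfrac49$. Hence
\[
\sum_{n=1}^{N}\Big(\mathcal C_n(\boldsymbol u_{\tau h})(\boldsymbol w_{\tau h})-\mathcal C_n^{\mathrm{GR}}(\boldsymbol u_{\tau h})(\boldsymbol w_{\tau h})\Big)=-\frac{T\tau^{2}}{18}\,c(\boldsymbol\phi)(\boldsymbol z)\,,\qquad
\|\boldsymbol w_{\tau h}\|_{L^2(I;\boldsymbol H^1(\Omega))}=\Big(\frac{T}{3}\Big)^{1/2}\|\boldsymbol z\|_{\boldsymbol H^1(\Omega)}\,,
\]
a ratio of order $\tau^{2}$, not $\tau^{3}$. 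The paper's proof has the same defect: it derives \eqref{eq:QErr0} from \eqref{eq:QErr4} by Cauchy--Schwarz, which only gives $T^{1/2}\tau^{k+1}$. What your interpolation argument does prove is the corrected estimate with $\tau^{k+1}$ and $B_k=B_k(A_k,T)$, which is still of the DG$(k)$ order $k+1$. Note also that your Peano form with $\partial_t^{2k+1}$ is the right one, while the paper's $\partial_t^{2k+2}$ is a slip.
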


\begin{proof}
For given $\boldsymbol u_{\tau h} \in Y_\tau^k(I) \otimes \boldsymbol V_h^{\operatorname{div}}$ and $\boldsymbol w_{\tau h} \in Y_\tau^k(I) \otimes \boldsymbol V_h^{r+1}$, we have that $c(\boldsymbol u_{\tau h})(\boldsymbol w_{\tau h})\in \mathbb P_{3k}(I_n;\R)$. Since the Gauss--Radau quadrature formula is exact for polynomials in $\mathbb P_{2k}(I_n;\R)$, the Peano kernel remainder yields that
\begin{align}
  \nonumber
  E_n^{\text{GR}}& \coloneqq \mathcal C^{\operatorname{GR}}_n(\boldsymbol u_{\tau h})(\boldsymbol w_{\tau h}) - \mathcal C^{\operatorname{GR}}_n(\boldsymbol u_{\tau h})(\boldsymbol w_{\tau h})\\
  &
  \nonumber
   = \kappa_k\,\tau_n^{2k+2}\,\partial_t^{2k+2} c(\boldsymbol u_{\tau h}(\xi_n))(\boldsymbol w_{\tau h}(\xi_n))  \\
\label{eq:QErr1}
  & = \kappa_k\,\tau_n^{2k+2} \sum_{|\alpha | \leq 2k+2\atop \alpha_i\leq k, \; i\in \{1,2,3\}} c\big(\partial_t^{(\alpha_1,\alpha_2)}  \boldsymbol u_{\tau h}(\xi_n)\big)\big(\partial_t^{\alpha_3} \boldsymbol w_{\tau h}(\xi_n)\big)
\end{align}
for some $\xi_n\in I_n$, with $\kappa_k$ depending only on $k$. Here, the multiindex $(\alpha_1,\alpha_2)$ denotes the respective order of the time derive in the product of $\boldsymbol u_{\tau h}$ with itself in~\eqref{eq:Defc}, i.e., $\partial_t^{(\alpha_1,\alpha_2)} (\boldsymbol u_{\tau h} \otimes \boldsymbol u_{\tau h}) \coloneqq \partial_t^{\alpha_1} \boldsymbol u_{\tau h} \otimes \partial_t^{\alpha_2}  \boldsymbol u_{\tau h}$. We recall that $\boldsymbol u_{\tau h},\boldsymbol w_{\tau h}\in Y_\tau^k(I) \otimes \boldsymbol V_h^{r+1}$, such that their time derivatives of order $k+1$ or higher vanish.

Now, let $\boldsymbol u_h \in \boldsymbol V_h^{\operatorname{div}}$, $\boldsymbol w_h \in \boldsymbol V_h^{r+1}$ be given. Recalling~\eqref{eq:Defc}, there holds by the H\"older inequality and embedding theorems for the volume integrals and by \cite[Eq.\ (4.46)]{HeywoodRannacher1982} for the boundary terms that
\begin{align}
\label{eq:QErr2}
|c(\boldsymbol u_h)(\boldsymbol w_h)| & \leq c \big(\|\nabla \boldsymbol u_h\|^2 \|\nabla \boldsymbol w_h\| +   \| \widetilde \Delta_h \boldsymbol u_h\|^{1/2}\|\nabla \boldsymbol u_h\|^{1/2} (\|\boldsymbol w_h\| + h \|\nabla \boldsymbol w_h\|)\big)\,.
\end{align}
Summing up~\eqref{eq:QErr1} from $n=1$ to $N$ and using~\eqref{eq:QErr2} along with~\eqref{eq:Reguwth}  yields that
\begin{equation}
\label{eq:QErr3}
\bigg| \sum_{n=1}^N E_n^{\text{GR}} \bigg|  \leq B_k \sum_{n=1}^N \tau_n^{2k+2}  \sum_{l=0}^k \| \partial_t^l  \boldsymbol w_{\tau h}\|_{L^\infty(I_n;\boldsymbol H^1(\Omega))}\,.
\end{equation}
By the $L^\infty$--$L^2$ inverse property \cite[Eq.\ (2.5)]{KarakashianMakridakis2005}
\begin{equation*}
\| y \| _{L^{\infty}(I_n;\R)}\leq c \tau_n^{-1/2} \| y \|_{L^2(I_n;\R)} \,, \quad \text{for }\;  y \in \mathbb P_k(I_n;\R)\,,
\end{equation*}
and the $H^1$--$L^2$ inverse property
\begin{equation*}
  \| \partial_t y \| _{L^{2}(I_n;\R)}\leq c \tau_n^{-1} \| y \|_{L^2(I_n;\R)} \,, \quad \text{for }\;  y \in \mathbb P_k(I_n;\R)\,,
\end{equation*}
we deduce from~\eqref{eq:QErr3} that
\begin{equation}
  \label{eq:QErr4}
  \bigg| \sum_{n=1}^N E_n^{\text{GR}} \bigg|  \leq B_k \sum_{n=1}^N   \tau_n^{k+3/2}  \|  \boldsymbol w_{\tau h}\|_{L^2(I_n;\boldsymbol H^1(\Omega))}.
\end{equation}
From~\eqref{eq:QErr4}, we conclude assertion~\eqref{eq:QErr0} by the inequality of Cauchy--Schwarz.
\end{proof}

\begin{remark}[On the result of Lemma~\ref{lem:GR-div}]
\begin{itemize}[leftmargin=*,itemsep=0pt,topsep=0pt,parsep=0pt]
\item The stability bounds in~\eqref{eq:Reguwth} are not straightforward. For the regularity of continuous and discrete solutions to the Navier--Stokes equations, the occurrence of non-local compatibility conditions, and uniform bounds up to $t=0$, we refer to the comprehensive literature, in particular~\cite{HeywoodRannacher1982,HeywoodRannacher1990}.

\item  Inequality~\eqref{eq:QErr0} yields an error estimation of the form as it is required and often applied in error analyses for Navier--Stokes approximations. After the additional application of the inequalities of Cauchy--Schwarz and Cauchy--Young  and a suitable choice of the test function $\boldsymbol w_{\tau h} $, the term $ \|\boldsymbol w_{\tau h} \|_{L^2(I;\boldsymbol H^1(\Omega))}$ can be absorbed by the viscous term of the error identity to the considered scheme; cp., e.g., \cite{john_FiniteElement_2016}.
\end{itemize}
\end{remark}

\subsection{Algebraic form of the discrete problem}
\label{Sec:Afdp}

Now, we rewrite Problem~\ref{Prob:DVP} in its algebraic form. By the choice of a local temporal basis in~\eqref{eq:BasYk}, supported on the subintervals $I_n$, we end up with a time marching scheme. In each time step, the nonlinear system of equations is solved by an inexact Newton--Krylov method using FGMRES iterations with $hp$ multigrid preconditioning. Even if three space dimensions and higher order approximations are involved, the time marching approach is economical and becomes still feasible without tremendous computing power and memory resources. In contrast to this, a holistic approach solves the global in time nonlinear algebraic system; cf.\ Remark \ref{rem:HolSys}. In the Newton iteration, the block lower bi-diagonal structure of the global algebraic system can then be used for building a time marching process and splitting the global system into a sequence of local algebraic problems again. This approach is not studied here. We recast Problem~\ref{Prob:DVP}, up to the quadrature error in the convective terms (cf.~\Cref{sec:gr-nonlinear}), in the following form.

\begin{problem}[Local algebraic Navier--Stokes problem]\label{Prob:LocAlgNS}
  Let $n\in\{1,\ldots,N\}$. For $n>1$ set $\boldsymbol v_{\tau h}(t_{n-1})=\sum_{m=1}^{M^{\boldsymbol v}} v_{n-1}^{k+1,m}\,\boldsymbol\chi_m^{\boldsymbol v}$, and for $n=1$ set $\boldsymbol v_{\tau h}(t_0)=\boldsymbol v_{0,h}=\sum_{m=1}^{M^{\boldsymbol v}} v_0^{m}\,\boldsymbol\chi_m^{\boldsymbol v}$. Define $\boldsymbol V_{n-1}\in \R^{(k+1)\cdot M^{\boldsymbol v}}$ as
  \begin{equation}
    \label{eq:DefVnm1}
    \boldsymbol{V}_{n-1}\coloneq
    \left\{\begin{array}{@{}ll}
      \big(\boldsymbol 0, \ldots, \boldsymbol 0,v_{n-1}^{k+1,1},\ldots,v_{n-1}^{k+1,M^{\boldsymbol v}} \big)^\top\,, & \text{for } n>1\,,\\[3pt]
      \big(\boldsymbol 0, \ldots, \boldsymbol 0,v_{0}^{1},\ldots,v_{0}^{M^{\boldsymbol v}} \big)^\top\,, & \text{for } n=1\,.
    \end{array} \right.
  \end{equation}
Find $\boldsymbol U_n \coloneqq (\boldsymbol V_n,\boldsymbol P_n)\in\mathbb R^{(k+1)\cdot (M^{\boldsymbol v}+M^p)}$ such that
  \begin{subequations}
    \label{eq:InAlgNS}
    \begin{alignat}{2}
      \nonumber
      & \underbrace{(\boldsymbol K_n^\tau\otimes \boldsymbol M_h)\,\boldsymbol V_n}_{\text{time derivative}}
       + \underbrace{(\boldsymbol M_n^\tau\otimes \boldsymbol I)\,\boldsymbol H_n(\boldsymbol V_n)}_{\text{convection (divergence form)}} + \underbrace{ (\boldsymbol M_n^\tau\otimes \nu\,\boldsymbol A_h)\,\boldsymbol V_n}_{\text{viscous}}  \\
      \nonumber
        & \quad + \underbrace{(\boldsymbol M_n^\tau\otimes \boldsymbol B_h^{\top})\,\boldsymbol P_n}_{\text{pressure coupling}} + \underbrace{(\boldsymbol M_n^\tau \otimes \boldsymbol I) \boldsymbol N_{\Gamma_D}^{\boldsymbol v,c}(\boldsymbol V_n)}_{\text{Boundary pairings with} \atop \text{convective term in~\eqref{eq:def-a-gam-tp}}} + \underbrace{(\boldsymbol M_n^\tau\otimes \boldsymbol N^{\boldsymbol v;b,r}_{\Gamma_D}(\gamma))\,\boldsymbol V_n}_{\text{boundary condition and} \atop \text{Nitsche's term $B^s+B_\gamma^r$ in~\eqref{eq:def-a-gam-tp}}} \\
            \label{eq:InAlgNS1}
      &\quad + \underbrace{(\boldsymbol M_n^\tau\otimes (\boldsymbol G^{p}_{\Gamma_D})^\top)\boldsymbol P_n}_{\text{boundary condition}} = \underbrace{\boldsymbol F_n}_{\text{body force}}  + \underbrace{\boldsymbol L_n}_{\text{Dirichlet data}\atop \text{via Nitsche}}  + \underbrace{(\boldsymbol C_n^\tau\otimes \boldsymbol M_h)\,\boldsymbol V_{n-1}}_{\text{jump at $t_{n-1}$ of DG in time}}\,,\\[1ex]
    \label{eq:InAlgNS2}
        & \underbrace{(\boldsymbol M_n^\tau\otimes \boldsymbol B_h)\,\boldsymbol V_n}_{\text{continuity}}  + \underbrace{(\boldsymbol M_n^\tau\otimes \boldsymbol G^{p}_{\Gamma_D})\,\boldsymbol V_n }_{\text{Nitsche's term $B^s$ in~\eqref{eq:def-a-gam-tp}}} = \boldsymbol 0 \,.
    \end{alignat}
  \end{subequations}
\end{problem}

On $I_n$, $\boldsymbol u_{\tau h} \in \boldsymbol{X}_{\tau h}^{kr}$ is then defined by the expansions in~\eqref{eq:Repvp} along with~\eqref{eq:DefSubvec_0}. For the definition of the quantities in~\eqref{eq:InAlgNS} we refer to \Cref{Sec:Prem} and Appendix~\ref{App:Assembly}. Problem~\ref{Prob:LocAlgNS} leads to a global in time system with block lower bi-diagonal structure.

\begin{definition}[Global algebraic Navier--Stokes problem]
\label{rem:HolSys}
Suppose that $\boldsymbol{V}_0$ is given by \eqref{eq:DefVnm1}. Find $\boldsymbol{U} \coloneqq (\boldsymbol{U}_1,\ldots ,\boldsymbol{U}_n)\in \R^{N\cdot (k+1)\cdot (M^{\boldsymbol v}+M^p)}$, such that 
\begin{equation}\label{eq:GloNS}
  \boldsymbol{\mathcal R}(\boldsymbol U) = \boldsymbol 0\,, \qquad \text{with }\; \boldsymbol{\mathcal R}(\boldsymbol U) \equiv
  \begin{pmatrix}
    \boldsymbol{\mathcal R}_1(\boldsymbol U_1;\boldsymbol V_0)\\
    \boldsymbol{\mathcal R}_2(\boldsymbol U_2;\boldsymbol V_1)\\
    \vdots \\
    \boldsymbol{\mathcal R}_N(\boldsymbol U_{N};\boldsymbol V_{N-1})
  \end{pmatrix}\,,
\end{equation}
with $\boldsymbol{\mathcal R}_n: \R^{(k+1)\cdot (M^{\boldsymbol v}+M^p)}\to \R^{(k+1)\cdot (M^{\boldsymbol v}+M^p)}$, for $n=1,\ldots,N$, being defined by~\eqref{eq:InAlgNS}.
\end{definition}

To solve the sequence of local in time problems~\eqref{eq:InAlgNS}, which amounts to solving~\eqref{eq:GloNS} row-wise, we need the Jacobian matrix of the mapping in~\eqref{eq:InAlgNS}. It is given by
\begin{equation}\label{eq:JacobianLocal0}
\boldsymbol{\mathcal J}_n(\boldsymbol U_n) = \begin{pmatrix} \boldsymbol{\mathcal J}_n^{1,1}(\boldsymbol U_n)_{1,1} & \boldsymbol{\mathcal J}_n^{1,2}(\boldsymbol U_n)\\[1ex] \boldsymbol{\mathcal J}_n^{2,1}(\boldsymbol U_n)_{2,1} & \boldsymbol{\mathcal J}_n^{2,2}(\boldsymbol U_n) \end{pmatrix} \in \R^{(k+1)\cdot (M^{\boldsymbol v}+M^p),(k+1)\cdot (M^{\boldsymbol v}+M^p)}\,,
\end{equation}
with submatrices $\boldsymbol{\mathcal J}_n^{1,1}\in \R^{(k+1)\cdot M^{\boldsymbol v},(k+1)\cdot M^{\boldsymbol v}}$, $\boldsymbol{\mathcal J}_n^{1,2}\in \R^{(k+1)\cdot M^{\boldsymbol v},(k+1)\cdot M^p}$ and $\boldsymbol{\mathcal J}_n^{2,2}\in \R^{(k+1)\cdot M^{p},(k+1)\cdot M^{p}}$ being defined by
\begin{subequations}
\label{eq:JacobianLocal1}
\begin{alignat}{4}
\label{eq:JacobianLocal2}
\boldsymbol{\mathcal J}_n^{1,1}(\boldsymbol U_n) & = \boldsymbol K_n^\tau\otimes \boldsymbol M_h + (\boldsymbol M_n^\tau\otimes \boldsymbol I)\,\boldsymbol H_n' (\boldsymbol V_n) + \boldsymbol M_n^\tau\otimes \,\nu \, \boldsymbol A_h\\
\nonumber& \quad  + (\boldsymbol M_n^\tau \otimes \boldsymbol I) (\boldsymbol N_{\Gamma_D}^{\boldsymbol v,c})' (\boldsymbol V_n)+ \boldsymbol M_n^\tau\otimes \boldsymbol N^{\boldsymbol v;b,r}_{\Gamma_D}(\gamma)\,,\\[1ex]
\label{eq:JacobianLocal3}
\boldsymbol{\mathcal J}_n^{1,2}(\boldsymbol U_n) & = \boldsymbol M_n^\tau\otimes  (\boldsymbol B_h+ \boldsymbol G^{p}_{\Gamma_D})\,, \quad \boldsymbol{\mathcal J}_n^{2,1}(\boldsymbol U_n) = \boldsymbol{\mathcal J}_n^{1,2}(\boldsymbol U_n)^\top \,,\\
\boldsymbol{\mathcal J}_n^{2,2}(\boldsymbol U_n) & = \boldsymbol{0}\,.
\end{alignat}
\end{subequations}
For this, we recall~\eqref{eq:DefHnp}. We note that $(\boldsymbol A \otimes \boldsymbol{B})^\top = \boldsymbol A^\top \otimes \boldsymbol B^\top $ is satisfied. The matrix $\boldsymbol M_n^\tau$ is symmetric by its definition. By~\eqref{eq:JacobianLocal3}, the Jacobian matrix has a saddle point structure. Moreover, a tensor product structure is preserved for its submatrices.

\section{Solution of the nonlinear system of equations}
\label{sec:nonlinear-solver}

We solve~\eqref{eq:InAlgNS}, recast in the system~\eqref{eq:GloNS}, by a Newton--Krylov method with FGMRES iterations and \(hp\) space-time multigrid preconditioning in a matrix-free framework. This is presented now in detail. The algorithms are summarized in Appendix~\ref{App:Algorithms}.

\subsection{Inexact Newton--Krylov with Armijo globalization}
\label{sec:ink}
We solve~\eqref{eq:GloNS} row-wise. For this, we use Newton's method with
globalization by a nonmonotone Armijo rule~\cite{GrippoLamparielloLucidi1986}.
We recall it briefly for completeness. Starting with some initial guess, suppose that the iterate $\boldsymbol{U}_n^{m}\in \R^{(k+1)\cdot (M^{\boldsymbol v}+M^p)}$ has been computed. Then, we calculate the Newton correction $\boldsymbol{\widehat U}^{m}_n\in \R^{(k+1)\cdot (M^{\boldsymbol v}+M^p)}$ by solving with the FGMRES method \cite{Saad1993} the system
\begin{equation}\label{eq:lin-sys}
   \boldsymbol{\mathcal J}_n(\boldsymbol U^{m}_n)\,\boldsymbol{\widehat U}^{m}_n= -\,\boldsymbol{\mathcal R}_n(\boldsymbol U_n^{m};\boldsymbol V_{n-1})\,,
\end{equation}
with the stopping criterion that
\begin{equation}\label{eq:inexact}
  \| \boldsymbol{\mathcal J}_n(\boldsymbol U^{m}_n)\,\boldsymbol{\widehat U}^{m}_n + \boldsymbol{\mathcal R}_n(\boldsymbol U_n^{m};\boldsymbol V_{n-1})\ \big\|_{\boldsymbol{\mathcal M}}
  \le \eta_m \,\big\| \boldsymbol{\mathcal R}_n(\boldsymbol U_n^{m};\boldsymbol V_{n-1})\big\|_{\boldsymbol{\mathcal M}}\,,
\end{equation}
where the mass-weighted norm is defined by
\[
  \boldsymbol{\mathcal M}_n \coloneqq
  \begin{pmatrix}
    \boldsymbol M_n^\tau\otimes \boldsymbol M_h & 0\\
    0 & \boldsymbol M_n^\tau\otimes \boldsymbol M_h^{p}
  \end{pmatrix},
  \qquad
  \|\boldsymbol Z\|_{\boldsymbol{\mathcal M}_n} \coloneqq \big(\boldsymbol Z^\top \boldsymbol{\mathcal M}_n\,\boldsymbol Z\big)^{1/2},
\]
for $\boldsymbol Z=(\boldsymbol Z^{\boldsymbol v},\boldsymbol Z^{p})$.
For some $\eta_{\max},\, c_{\mathrm{EW}}\in(0,1)$ and $\gamma_{\mathrm{EW}}\in(1,2]$, the parameter $\eta_m$ is chosen as in~\cite{EisenstatWalker1996}
\begin{equation}\label{eq:ew}
  \eta_m = \min\left\{\eta_{\max},\,
  c_{\mathrm{EW}}
  \left(\frac{\|\boldsymbol{\mathcal R}_n(\boldsymbol U_n^{m};\boldsymbol V_{n-1})\|_{\boldsymbol{\mathcal M}_n}}
             {\max\{\|\boldsymbol{\mathcal R}_n(\boldsymbol U_n^{m-1};\boldsymbol V_{n-1})\|_{\boldsymbol{\mathcal M}_n},\,\varepsilon\}}
  \right)^{\gamma_{\mathrm{EW}}}\right\}\,.
\end{equation}
Our globalization approach uses Armijo's backtracking with the merit function
\[\phi(\alpha)\coloneq\tfrac12\|\boldsymbol{\mathcal R}_n(\boldsymbol U_n^{m}+\alpha\,\boldsymbol{\widehat U}_n^{m};\boldsymbol V_{n-1})\|_{\boldsymbol{\mathcal M}}^{2}\,.\]
For $g_0 \coloneqq \phi'(0)= \boldsymbol{\mathcal R}_n(\boldsymbol U_n^{m};\boldsymbol V_{n-1})^\top \boldsymbol{\mathcal J}_n(\boldsymbol U^{m}_n)\boldsymbol{\widehat U}^{m}_n$ and the largest $\alpha\in(0,1]$ such that
\begin{equation}\label{eq:armijo}
  \phi(\alpha) \le \phi(0) + c_1\,\alpha\,g_0, \qquad \text{for }\, c_1\in(0,1)\,,
\end{equation}
is satisfied, we then put
\begin{equation}
\boldsymbol U_n^{m+1} = \boldsymbol U_n^{m+1} + \alpha \boldsymbol{\widehat U}_n^{m}\,.
\end{equation}
The Newton iteration along with its control is summarized in Algorithm~\ref{alg:ink} to~\ref{alg:armijo}.

\subsection{\label{sec:mg-framework} Preconditioning of GMRES by \(hp\) space-time multigrid (STMG)}

We solve~\eqref{eq:lin-sys} by FGMRES iterations \cite{Saad1993} with right-sided \(hp\) space-time multigrid preconditioning. This approach was developed and analyzed in \cite{MaMuBa25} for the linear Stokes problem. Geometric refinement and coarsening of the spatial and temporal mesh is referred to as $h$-multigrid, while the refinement and coarsening of the polynomial degree \(k,r\in\mathbb{N}\) is referred to as $p$-multigrid. Here, $h$-multigrid is applied to the space variables only, since we solve~\eqref{eq:GloNS} row-wise, which amounts to a time stepping process. In the sequel, we restrict ourselves to presenting only the differences to \cite{MaMuBa25} and innovations of the preconditioner that are made for the nonlinear Navier--Stokes problem and the embedding of the FGMRES method into the Newton iterations. The preconditioner exploits the tensor product structure of the discrete solution space~\eqref{eq:GDS} and is implemented in a matrix-free form in the deal.II library \cite{kronbichlerGenericInterfaceParallel2012} in order to enhance its efficiency. The FGMRES iterations for~\eqref{eq:lin-sys} are built on the evaluation of the Jacobian matrix computed in~\eqref{eq:JacobianLocal0}, whereas a surrogate of $\boldsymbol{\mathcal J}_n(\cdot)$ is applied in the smoother of the STMG method (cf.~\Cref{subsec:VankaSmth}). To present the STMG preconditioner, we need notation. For further details, we also refer to \cite{MaMuBa25}.

\paragraph{Multilevel hierarchies and discrete spaces}

Let $\{\mathcal{T}_{s}\}_{s=0}^{S}$ be a quasi-uniform family of nested triangulations of the spatial domain $\Omega$, with characteristic mesh sizes $h_s$ satisfying $h_s \lesssim \frac{h_{s-1}}{2}$ and $h_0 = \mathcal O(1)$. This defines a hierarchy of nested spaces
\[
\boldsymbol H_{s}^{k,r+1}\coloneq Y_\tau^k(I)\otimes \boldsymbol V_s^{r+1}(\Omega)\,,
\qquad
H_{s}^{k,r}\coloneq Y_\tau^k(I)\otimes Q_s^{r}(\Omega)\,.
\]
For brevity, we let $r\ge k$ as well as $k=2^K$ and $r=2^R$ for some $K,R\in \N$.

\paragraph{Grid transfer operators}
By the mapping 
\begin{equation}
\label{eq:MatGeomProl}
P^h_{s-1\to s}:\{\boldsymbol V^{r+1}_{s-1}(\Omega),Q^r_{s-1}(\Omega)\}\to\{\boldsymbol V^{r+1}_{s}(\Omega),Q^r_{s}(\Omega)\}
\end{equation}
we denote the respective canonical embedding of the spacial finite element spaces into their refinement within the mesh hierarchy. For $\{\boldsymbol v_{\tau h},p_{\tau h}\}\in Y_\tau^k(I_n) \otimes \boldsymbol V^{r+1}_{s-1}(\Omega) \times Y_\tau^k(I_n) \otimes Q^r_{s}(\Omega)$, the matrix representation $\boldsymbol P^{n}_{s-1\to s}\in \R^{(k+1)\cdot M_s,(k+1)\cdot M_{s-1}}$ of the geometric prolongation for space-time functions on $I_n$ is the tensor product 
\begin{equation*}
\boldsymbol P^{n}_{s-1\to s} \coloneqq \boldsymbol E_{k+1}\otimes \boldsymbol P_{s-1\to s}^h
\end{equation*}
for the vector representations of $\{\boldsymbol v_{\tau h},p_{\tau h}\}$ on $I_n$, where $\boldsymbol P_{s-1\to s}^h\in \R^{M_{s},M_{s-1}}$ is the matrix representation of the prolongation \eqref{eq:MatGeomProl} in space from $\mathcal T_{s-1}$ to $\mathcal T_s$ for $\boldsymbol V_{s-1}^{r+1}(\Omega)$ or $ Q_{s-1}^{r}(\Omega)$, respectively, with $M_s$ denoting the dimension of the finite element space.

Polynomial prolongation $P^{p}_{(k/2,r/2)\to(k,r)}= P^p_{k/2\to k} \otimes P^p_{r/2\to r}$ is defined by tensor products of the prolongations in the either variables. Its matrix representation is then defined by matrix products of tensor products of the form as in \eqref{eq:MatGeomProl}. Restrictions are chosen as the adjoint of the prolongation;  cf.~\cite{MaMuBa25}. 

\paragraph{Cycle and coarsening order}
We employ a $V$-cycle multigrid approach with $\nu_1/\nu_2$ pre-/post-smoothing steps due to its superior parallel scaling properties. Coarsening is done
\emph{firstly in the polynomial degree} ($p$), i.e.\ $(k,r)\mapsto(k/2,r/2)$ until $(1,1)$ is reached, and \textit{secondly geometrically for the space mesh} ($h$), i.e\ $(s)\mapsto(s{-}1)$. We recall that $h$-multigrid for the temporal variable is not used here. If the $p$-hierarchy of the spatial variables is larger than of the temporal variable, then $p$-coarsening steps are done firstly until the hierarchy heights in space and time coincide; cf.~\cite{MaMuBa25} for the presentation of a pseudo algorithm. The coarsest problem is solved either directly or by FGMRES iterations. Corrections are prolongated in the reverse order.

This coarsening order follows the \(hp\)-STMG construction in \cite{MaMuBa25} and preserves the tensor-product structure exploited by the patch-based smoother.
Patch definitions and variants (element/vertex-star, treatment of pressure DoFs) are specified in
Subsection~\ref{subsec:VankaSmth} and in \cite{MaMuBa25}.

\subsection{Inexact space-time Vanka smoother\label{subsec:VankaSmth}}

The smoother is essential for the performance of multigrid methods. We use a local smoother of Vanka-type \cite{vankaBlockimplicitMultigridSolution1985}. This smoother has proved its efficiency and scalability for saddle point problems \cite{john_numerical_2000,wobkerNumericalStudiesVankaType2009,ahmedAssessmentSolversSaddle2018,MaMuBa25}. For its construction in the framework of space-time finite element methods we refer to \cite{anselmannGeometricMultigridMethod2023,anselmannEnergyefficientGMRESMultigrid2024}. In particular, we use a local Vanka smoother that operates on the local slabs $S_n\coloneqq K\times I_n$, for $K\in \mathcal T_s$. All global degress of freedom linked to the element $K\in \mathcal T_d$ for all $(k+1)$ Gauss–Radau time points of $I_n$ are updated simultaneously by the local Vanka systems. In our matrix-free approach, the global Jacobian matrix~\eqref{eq:JacobianLocal0} is not explicitly built in the FGMRES iterations. Instead, a matrix-free evaluation is used; cf.~\Cref{sec:mf}. However, the Vanka smoother that works on the degrees of freedom of the local slabs $S_n$ continues to be matrix-based. To accelerate its application (cf.~\cite{MaMuBa25}), an approximation of the Jacobian on $S_n$ is used here. The construction of this inexact space-time Vanka smoother is presented now.

We consider a fixed level of the joint multigrid hierarchy of space-time polynomial order and spatial mesh refinement, introduced before. To simplify the notation, we omit the indices characterizing the respective subinterval $I_n$ and multigrid level. On subinterval $I_n$, we consider solving a linear system of the form (cf.~\eqref{eq:lin-sys})
\begin{equation*}
\boldsymbol{\mathcal J} \boldsymbol d = \boldsymbol r
\end{equation*}
with the Jacobian matrix $\boldsymbol{\mathcal J}$ being defined in~\eqref{eq:JacobianLocal0} and some residual or right-hand side vector $\boldsymbol r$. For this system, the local Vanka smoother is defined by
\begin{equation}
\label{eq:LVS}
  \boldsymbol{\mathcal S}_{S_n}(\boldsymbol d) \coloneqq  \boldsymbol{\mathcal R}_{S_n} \boldsymbol d + \omega \boldsymbol{\mathcal J}_{S_n}^{-1} \boldsymbol{\mathcal R}_{S_n} (\boldsymbol r - \boldsymbol{\mathcal J} \boldsymbol d)\,.
\end{equation}
In~\eqref{eq:LVS}, we denote by $\boldsymbol{\mathcal R}_{S_n}$ the local restriction operator that assigns to a global defect vector $\boldsymbol d$ the local block vector $\boldsymbol{\mathcal R}_{S_n} \boldsymbol d$ that contains all components of $\boldsymbol d$ that are associated with all degrees of freedom linked to the slab $S_n$. Further, the parameter $\omega>0$ is an algorithmic relaxation parameter, and $\boldsymbol{\mathcal J}_{S_n}$ denotes the local Jacobian associated with the degrees of freedom on the slab $S_n$. The local Jacobian matrix $\boldsymbol{\mathcal J}_{S_n}$ inherits the block structure of its global counterpart in~\eqref{eq:JacobianLocal0}.

In~\eqref{eq:JacobianLocal2}, the block diagonal matrix $\boldsymbol H_n'(\boldsymbol V_n)$, defined in~\eqref{eq:DefHnp}, of the convective contribution depends on the block vectors $\boldsymbol V_n^a$ of the velocity degrees of freedom at all time nodes $t_{n,a}\in I_n$, for $a=1,\ldots,k+1$. This feature is inherited by its local on $K$ counterpart $\boldsymbol H_{n,K}'(\boldsymbol V_{n,K})$. It inflates both measures, setup cost and memory, of its evaluation. To reduce this, we consider using a surrogate \(\boldsymbol{\widetilde H}_{n,K}' (\boldsymbol V_{n,K}^\star)\), that evaluates the current solution in the midpoint $t_n^\ast$ of $I_n$ only. Thus, we substitute in in the local Jacobian $\boldsymbol{\mathcal J}_{S_n}$ the contribution  $\boldsymbol H_{n,K}'(\cdot)$ by
\begin{equation}
\label{eq:DefTildeHn}
\boldsymbol{\widetilde H}_{n,K}' (\boldsymbol V_n^\star) \coloneqq \operatorname{diag}(\boldsymbol H_K'(\boldsymbol V_n^\ast),\ldots,\boldsymbol H_K'(\boldsymbol V_n^\ast))
\end{equation}
with $\boldsymbol H_K'(\cdot)$ denoting the local counterpart of $\boldsymbol H'(\cdot)$, defined in~\eqref{eq:DefHp}, and
\begin{equation}
\label{eq:DefVnAst}
\boldsymbol V_{n,K}^\ast =\sum_{a=1}^{k+1}\varphi_n^a(t_n^\star)\sum_{i=1}^{M_K^{\boldsymbol v}} V^{a}_{n,K,i} \boldsymbol\chi_i^{\boldsymbol v}
\in \boldsymbol V_h^{r+1}\,,
\end{equation}
where $M^{\boldsymbol v}_K$ denotes the number of local velocity degrees of freedom on $K$. On each slab $S_n$, the spatial core \(\boldsymbol H'_K(\boldsymbol V_n^\star)\) is thus \emph{shared} across all time nodes. For the set of velocity and pressure degrees of freedom of size \(m \simeq d(r{+}2)^d + (r{+}1)^d\), factorization and storage is thus needed only once per slab with computational cost \(\mathcal O(m^3)\) and memory usage \(\mathcal O(m^2)\), instead of \((k+1)\) times. This is essential for memory-friendly and cache-efficient implementations. Letting $\boldsymbol{\widetilde{\mathcal J}}_{S_n}$ denote the resulting surrogate of $\boldsymbol{\mathcal J}_{S_n}$ in~\eqref{eq:LVS}, assembled by using~\eqref{eq:DefTildeHn}, a single local Vanka sweep reads as
\begin{equation}\label{eq:vanka0}
  \boldsymbol d \leftarrow \boldsymbol d
  + \omega \sum_{K}
  \boldsymbol R_{S_n}^{\top} (\boldsymbol{\widetilde{\mathcal J}}_{S_n})^{-1} \boldsymbol R_{S_n} (\boldsymbol r - \boldsymbol{\mathcal J} \boldsymbol d)\,,
\end{equation}
where $\boldsymbol R_{S_n}$ is the matrix representation of $\boldsymbol{\mathcal R}_{S_n}$ in~\eqref{eq:LVS}.

\begin{remark}
The Jacobian of the nonlinear term $\boldsymbol N_{\Gamma_D}^{\boldsymbol v,c}(\boldsymbol V_n)$ in~\eqref{eq:InAlgNS1}, that is due to the boundary pairings with the convective term in~\eqref{eq:def-a-gam-tp}, is approximated similiarly by a surrogate.
\end{remark}

Finally, we analyze the surrogate~\eqref{eq:DefTildeHn}. We aim to characterize the perturbation of the performance of the local Vanka smoother by buillding the Jacobian on the approximation~\eqref{eq:DefTildeHn} to~\eqref{eq:DefHnp}. In order not to overload this work, we use some simplifying assumptions about $\boldsymbol H'(\cdot)$ and $\boldsymbol V_n^\ast$, without proving them explicitly. Their rigorous proof is left as a work for the future. The contribution $\boldsymbol N_{\Gamma_D}^{\boldsymbol v,c}(\boldsymbol V_n)$ is also neglected. Nevertheless, Lemma \ref{lem:patch-perturbation} illustrates advantageous properties of the surrogate based on~\eqref{eq:DefTildeHn} and, thereby, suggests its application. In the following, all matrix norms are spectral (operator) norms. For a given matrix $\boldsymbol A$, let $s_{\min}(\boldsymbol A)$ and $s_{\max}(\boldsymbol A)$ denote its smallest and largest singular values, respectively, i.e., $s_{\max}(\boldsymbol A)=\|\boldsymbol A\|$ and, if $\boldsymbol A$ is invertible, $s_{\min}(\boldsymbol A)=\|\boldsymbol A^{-1}\|^{-1}$.

\begin{lemma}[Characterization of local Vanka surrogate]
\label{lem:patch-perturbation}
Consider a slab $S_n$ with spatial element $K$. Let $\boldsymbol S_n \coloneqq \boldsymbol{{\mathcal J}}_{S_n}\in \R^{(k+1)\cdot (M_K^{\boldsymbol v}+M_K^p),(k+1)\cdot (M_K^{\boldsymbol v}+M_K^p)}$, with $M^{\boldsymbol v}_K+M^p_K$ denoting the total number of velocity and pressure degrees of freedom on $K$, be the exact local Vanka matrix of~\eqref{eq:LVS} on $K\in \mathcal T_h$. Let $\boldsymbol{\widetilde S}_n\in \R^{(k+1)\cdot (M_K^{\boldsymbol v}+M_K^p),(k+1)\cdot (M_K^{\boldsymbol v}+M_K^p)}$ be its surrogate based on the convective contribution~\eqref{eq:DefTildeHn}. Assume local Lipschitz continuity of the local Jacobian matrix $\boldsymbol H_K'(\cdot)\in \R^{M_K^{\boldsymbol v},M_K^{\boldsymbol v}}$ on $K$, such that
\begin{equation}
\label{eq:CondLip}
\big\|\boldsymbol H_K'(\boldsymbol U_K)-\boldsymbol H_K'(\boldsymbol V_K)\big\|
  \;\le\; L_K\,\big\|\boldsymbol U_K-\boldsymbol V_K\big\|
  \qquad \text{for }\boldsymbol U_K,\boldsymbol V_K\in \R^{M^{\boldsymbol v}_K}\,.
\end{equation}
 Let $a\in \{1,\ldots,k+1\}$. For $\boldsymbol V_{n|K}(t_n^a)$, $\boldsymbol V_{n|K}(t_n^\ast)\in \boldsymbol V^{r+1}(K)$, represented by $\boldsymbol V_{n,K}^a \in \R^{M^{\boldsymbol v}_K}$ and $\boldsymbol V_{n,K}^\ast\in \R^{M^{\boldsymbol v}_K}$, respectively, assume the approximation property that
\begin{equation}
\label{eq:ApproxProp}
\|\boldsymbol V_{n,K}^{a}-\boldsymbol V_{n,K}^\star\|\le C_K\,\tau_n\,.
\end{equation}
Then the local perturbation $\boldsymbol E_{n}\coloneq \boldsymbol{\widetilde S}_n- \boldsymbol S_n $ satisfies that
  \begin{equation}
  \label{eq:ErrSn}
  \|\boldsymbol E_{n}\|\;\le\; C\,\tau_n,\qquad
  C\coloneq c \, C_K\,L_K\,\|\boldsymbol M_n^{\tau}\|\,,
  \end{equation}
where $c$ is independent of the step sizes $k$ and $\tau_n$. Let
\begin{equation}
\label{eq:CondEn}
\| \boldsymbol E_n\| \leq  \varepsilon_n \| \boldsymbol S_n^{-1}\|^{-1}\,, \; \text{with some}\; \varepsilon_n <1\,.
\end{equation}
Then there holds the following.
\begin{enumerate}[label=(\roman*),leftmargin=*,itemsep=2pt,topsep=0pt,parsep=0pt]
\item \textbf{Nonsingularity and inverse bound.}
  The matrix $\boldsymbol{\widetilde S}_{n}$ is invertible and
  \[
  \big\|\boldsymbol{\widetilde S}_n^{-1}\big\|
  \ \le\ {(1-\varepsilon_{n})}^{-1}{\big\| \boldsymbol{S}_n^{\,-1}\big\|}\,.
  \]

\item \textbf{Approximation property.} There holds that
\[
\big\|\boldsymbol I - \boldsymbol{S}_{n}^{\,-1}
\boldsymbol{\widetilde S}_{n}\big\|  \le\ \varepsilon_{n}.
\]
Hence the spectrum and the field of values satisfy that
\[
\begin{aligned}
  \sigma\left(\boldsymbol{S}_n^{\, -1}\boldsymbol{\widetilde S}_n\right)
  \subset \{z\in\mathbb C:\ |z-1|\le \varepsilon_{n}\}\,,\qquad
  \mathcal W\left({\boldsymbol S}_n^{\, -1}\boldsymbol{\widetilde S}_n \right)
  \subset \{z\in\mathbb C:\ |z-1|\le \varepsilon_{n}\}\,.
\end{aligned}
\]

\item \textbf{Singular-value (conditioning) bounds.}
For the minimal and maximal singular values there holds that
\[
\begin{aligned}
  (1-\varepsilon_{n})\,s_{\min}(\boldsymbol{S}_{n})
  \,\le\,s_{\min}(\boldsymbol{\widetilde  S}_{n})
  \,\le\,s_{\max}(\boldsymbol{\widetilde  S}_{n})
  \,\le\,(1+\varepsilon_{n})\,s_{\max}(\boldsymbol{S}_{n}).
\end{aligned}
\]
\end{enumerate}
\end{lemma}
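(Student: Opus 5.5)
The perturbation $\boldsymbol E_n$ only involves the convective block of the local Jacobian. I will first bound $\|\boldsymbol E_n\|$ directly from this structure. Items (i)--(iii) then follow from a standard Neumann-series argument.

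\textbf{Step 1: structure and bound of $\boldsymbol E_n$.} By~\eqref{eq:JacobianLocal1}, the blocks $\boldsymbol K_n^\tau\otimes\boldsymbol M_h$, $\boldsymbol M_n^\tau\otimes\nu\boldsymbol A_h$, the Nitsche terms and the pressure couplings are identical in $\boldsymbol S_n$ and $\boldsymbol{\widetilde S}_n$. The boundary convective contribution is neglected by assumption. Hence only the $(1,1)$ block differs, and
\[
\boldsymbol E_n=\begin{pmatrix}(\boldsymbol M_n^\tau\otimes\boldsymbol I)\,\boldsymbol D_n & \boldsymbol 0\\ \boldsymbol 0&\boldsymbol 0\end{pmatrix},\qquad
\boldsymbol D_n\coloneqq\operatorname{diag}\big(\boldsymbol H_K'(\boldsymbol V_{n,K}^\ast)-\boldsymbol H_K'(\boldsymbol V_{n,K}^a)\big)_{a=1}^{k+1}.
\]
The spectral norm of a block diagonal matrix is the maximum of the block norms. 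Together with~\eqref{eq:CondLip} and~\eqref{eq:ApproxProp}, this gives $\|\boldsymbol D_n\|\le\max_a L_K\|\boldsymbol V_{n,K}^a-\boldsymbol V_{n,K}^\ast\|\le L_KC_K\tau_n$. Since $\|\boldsymbol A\otimes\boldsymbol B\|=\|\boldsymbol A\|\|\boldsymbol B\|$, submultiplicativity yields $\|\boldsymbol E_n\|\le\|\boldsymbol M_n^\tau\|L_KC_K\tau_n$, which is~\eqref{eq:ErrSn}. The generic $c$ absorbs norm-equivalence constants, for instance if the vector norm in~\eqref{eq:ApproxProp} is a coefficient norm that differs from the one implicit in~\eqref{eq:CondLip}. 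This is the one place needing care: the local Kronecker factor $\boldsymbol M_n^\tau$ and $\boldsymbol I$ must be applied with the correct blocking, so that the ordering of time and space indices in the local vector does not matter. It does not, since a permutation is orthogonal.

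\textbf{Step 2: (ii).} Write $\boldsymbol{\widetilde S}_n=\boldsymbol S_n+\boldsymbol E_n=\boldsymbol S_n(\boldsymbol I+\boldsymbol S_n^{-1}\boldsymbol E_n)$. Then $\boldsymbol I-\boldsymbol S_n^{-1}\boldsymbol{\widetilde S}_n=-\boldsymbol S_n^{-1}\boldsymbol E_n$, whose norm is at most $\|\boldsymbol S_n^{-1}\|\|\boldsymbol E_n\|\le\varepsilon_n$ by~\eqref{eq:CondEn}. For the spectrum: any eigenvalue $z$ of $\boldsymbol S_n^{-1}\boldsymbol{\widetilde S}_n$ makes $z-1$ an eigenvalue of $-\boldsymbol S_n^{-1}\boldsymbol E_n$, so $|z-1|\le\varepsilon_n$. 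For the field of values: for unit $x$ we have $x^\ast\boldsymbol S_n^{-1}\boldsymbol{\widetilde S}_nx-1=-x^\ast\boldsymbol S_n^{-1}\boldsymbol E_nx$, which is bounded by $\varepsilon_n$ via Cauchy--Schwarz. The field-of-values statement contains the spectral one.

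\textbf{Step 3: (i) and (iii).} Since $\|\boldsymbol S_n^{-1}\boldsymbol E_n\|\le\varepsilon_n<1$, a Neumann series shows that $\boldsymbol I+\boldsymbol S_n^{-1}\boldsymbol E_n$ is invertible with inverse norm at most $(1-\varepsilon_n)^{-1}$. Hence $\boldsymbol{\widetilde S}_n$ is invertible and $\|\boldsymbol{\widetilde S}_n^{-1}\|\le(1-\varepsilon_n)^{-1}\|\boldsymbol S_n^{-1}\|$, which is (i). Taking reciprocals gives $s_{\min}(\boldsymbol{\widetilde S}_n)\ge(1-\varepsilon_n)s_{\min}(\boldsymbol S_n)$. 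For the upper bound, $\|\boldsymbol{\widetilde S}_n\|\le\|\boldsymbol S_n\|+\|\boldsymbol E_n\|\le s_{\max}(\boldsymbol S_n)+\varepsilon_n s_{\min}(\boldsymbol S_n)\le(1+\varepsilon_n)s_{\max}(\boldsymbol S_n)$, using $\|\boldsymbol S_n^{-1}\|^{-1}=s_{\min}(\boldsymbol S_n)\le s_{\max}(\boldsymbol S_n)$. The middle inequality $s_{\min}\le s_{\max}$ is trivial.

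The main, though modest, obstacle is Step 1. Items (i)--(iii) are routine perturbation theory once~\eqref{eq:CondEn} is in hand.
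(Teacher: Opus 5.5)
Your proposal is correct and follows essentially the paper's route. You apply \eqref{eq:CondLip} and \eqref{eq:ApproxProp} to the only differing block to get \eqref{eq:ErrSn}, and use $\boldsymbol{\widetilde S}_n=\boldsymbol S_n(\boldsymbol I+\boldsymbol S_n^{-1}\boldsymbol E_n)$ with a Neumann series for (i)--(iii); your additive bound on $s_{\max}$ and your use of (i) for $s_{\min}$ are only cosmetic variants of the paper's product inequalities for singular values.

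One harmless sign slip: since $\boldsymbol S_n^{-1}\boldsymbol{\widetilde S}_n=\boldsymbol I+\boldsymbol S_n^{-1}\boldsymbol E_n$, you have $z-1\in\sigma(\boldsymbol S_n^{-1}\boldsymbol E_n)$ and $x^\ast\boldsymbol S_n^{-1}\boldsymbol{\widetilde S}_nx-1=+x^\ast\boldsymbol S_n^{-1}\boldsymbol E_nx$. Only moduli enter, so this does not affect the bounds.
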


\begin{proof}
The Lipschitz continuity~\eqref{eq:CondLip} along with the local on $K$ approximation property~\eqref{eq:ApproxProp} directly proves the assertion~\eqref{eq:ErrSn}. The Neumann series expansion of matrices directly proves item (i) and the approximation property in item (ii). Since every eigenvalue $\lambda$ of $\boldsymbol{S}_n^{\, -1}\boldsymbol{\widetilde S}_n$ satisfies that $|\lambda|\le\| \boldsymbol{S}_n^{\, -1}\boldsymbol{\widetilde S}_n\|$, the spectrum of $\boldsymbol I -  \boldsymbol{S}_n^{\, -1}\boldsymbol{\widetilde S}_n$ lies in the ball
of radius $\varepsilon_n$ around $1$, and the field-of-values inclusion follows directly. To show item (iii), we use that  $s_{\min}(\boldsymbol A\boldsymbol B)\ge s_{\min}(\boldsymbol A)\,s_{\min}(\boldsymbol B)$ and $s_{\max}(\boldsymbol A\boldsymbol B)\le s_{\max}(\boldsymbol A)\,s_{\max}(\boldsymbol B)$. Further we have the product representation  \(\boldsymbol{\widetilde S}_n = \boldsymbol{S}_n (\boldsymbol I+\boldsymbol S_n^{-1} \boldsymbol E_n)\). By means of~\eqref{eq:CondEn}, we then get that
\begin{equation*}
s_{\max}(\boldsymbol{\widetilde S}_n) \leq (1+ \varepsilon_n) s_{\max}(\boldsymbol{S}_n) \,.
\end{equation*}
The lower bound for $s_{\min}(\boldsymbol{\widetilde S}_n)$ follows similarly.
\end{proof}


\begin{remark}[On the results of Lemma~\ref{lem:patch-perturbation}]
\begin{itemize}[leftmargin=*,itemsep=0pt,topsep=0pt,parsep=0pt]
\item If $\varepsilon_{n}=\mathcal O(\tau_n)$ by means of~\eqref{eq:ErrSn}, we conclude that the
midpoint  surrogate $\boldsymbol{\widetilde S}_{n}$ and the exact local Jacobian $\boldsymbol S_n $ are uniformly close to each other (in operator norm, spectrum, field of values, and singular values).

\item The condition~\eqref{eq:ApproxProp} controls on slab $S_n = K \times I_n$ the distance between $\boldsymbol V_{n|K}(t_n^a)\in \boldsymbol V^{r+1}(K)$ and $\boldsymbol V_{n|K}(t_n^\ast)\in \boldsymbol V^{r+1}(K)$ in the Eucledian norm of their coefficient vectors. Norm equivalence between the $L^2(K)$ norm of finite element functions and the vector norm of their coefficients in the finite element basis is enured, but $h_K$ dependent,
\begin{equation*}
  c_1 h_K^{d/2} \| \boldsymbol{V}\|\leq \| \boldsymbol v_h \|_{L^2(K)} \leq c_2 h_K^{d/2} \| \boldsymbol{V}\|\,,
\end{equation*}
for $ \boldsymbol v_h = \sum_{j=1}^{M_K^{\boldsymbol v}} V_{j} \boldsymbol \chi^{\boldsymbol v}_{K,j} \in \boldsymbol V^{r+1}(K)$ and $\boldsymbol V = (V_1,\ldots , V_{{M_K}^{\boldsymbol v}})^\top$.
For brevity, we do not incorporate this into Assumption~\eqref{eq:ApproxProp},
since a rigorous \(L^2(\Omega)\) error estimate and the induced bound on the
distance of coefficient vectors is beyond the scope of this work.
We defer such an analysis to future work.

\item In \Cref{lem:patch-perturbation}, (i) guarantees that for the patch matrix the evaluation of the convective part in the midpoint does not deteriorate the local patch solve and controls its inverse, (ii) shows that the frozen patch acts as a near-identity preconditioner for the exact patch (so the Vanka update remains effective), and (iii) quantifies that conditioning,
and thus damping properties, deviate by at most $\mathcal O(\tau_n)$, which is then kept small in practice by our rebuild triggers. Precisely, the smoother is \emph{rebuilt} using an updated \(\boldsymbol U_n^\star\) to reduce \(\varepsilon_{n}\) when convergence deteriorates. Let
\[
\rho_m \coloneq \frac{\|\boldsymbol{\mathcal R}_n(\boldsymbol U_n^{m};\boldsymbol V_{n-1})\|_{\mathcal M}}{\|\boldsymbol{\mathcal R}_n(\boldsymbol U_n^{m-1};\boldsymbol V_{n-1})\|_{\mathcal M}},
\]
and $\kappa_m$ the number of FGMRES iterations required until the inexactness test~\eqref{eq:inexact} is first met. Then, choose thresholds $\theta_N>1$, $\theta_L>1$, an absolute cap $\kappa_{\rm abs}$, and a
stagnation window $(s,\vartheta)$. We rebuild if $\rho_m\ge \theta_N\rho_{m-1}$ (Newton
deterioration), or $\kappa_m\ge \max(\theta_L\kappa_{m-1},\kappa_{\rm abs})$ (linear
deterioration), or if FGMRES residuals stagnate with $\|r_{j+1}\|/\|r_j\|\ge \vartheta$ for $s$ consecutive inner iterations.

\end{itemize}
\end{remark}

\subsection{Matrix-free operator evaluation}\label{sec:mf}
In this work, linear operators are evaluated without the explicit formation and
storage of system matrices. For this, we rely on the matrix-free multigrid
framework in the \texttt{deal.II}
library~\cite{africa_dealii_2024,kronbichlerGenericInterfaceParallel2012,munchEfficientDistributedMatrixfree2023,fehnHybridMultigridMethods2020}.
All matrix-vector products \(\boldsymbol{Y}=\boldsymbol S\,\boldsymbol X\) for solving
Problem~\ref{Prob:LocAlgNS} are computed via global accumulation of element-wise
operations,
\[
\boldsymbol{S}\,\boldsymbol X
=
\sum_{c=1}^{n_c}
\boldsymbol{R}_{c,\mathrm{loc\text{-}glob}}^\top\,
\boldsymbol{S}_c\,
\boldsymbol{R}_{c,\mathrm{loc\text{-}glob}}\,\boldsymbol X ,
\qquad
\boldsymbol{S}_c
=
\boldsymbol{B}_c^\top\,\boldsymbol{D}_c\,\boldsymbol{B}_c,
\]
where \(\boldsymbol{R}_{c,\mathrm{loc\text{-}glob}}\) maps local degrees of
freedom to global indices, \(\boldsymbol{B}_c\) contains shape function and gradient
evaluations, and \(\boldsymbol{D}_c\) encodes quadrature weights and material/
flux coefficients. Sum-factorization reduces multi-dimensional kernels to
products of one-dimensional operations; vectorization further accelerates the
evaluation. These techniques are used for the spatial operators
in Problem~\ref{Prob:LocAlgNS} and their Jacobians~\eqref{eq:JacobianLocal0}--\eqref{eq:JacobianLocal1}. The temporal matrices in~\eqref{eq:InAlgNS} are precomputed
as in~\cite{margenbergSpaceTimeMultigridMethod2024a,MaMuBa25}. Products such as
\((\boldsymbol M^{\tau} \otimes \boldsymbol A_h)\boldsymbol V\) are evaluated
by computing \(\boldsymbol A_h \boldsymbol V^a\) once per temporal DoF \(a\),
followed by small block multiplications with the temporal matrices; this extends
to all Kronecker products in~\eqref{eq:InAlgNS}.

\paragraph{Navier--Stokes: matrix-free application of the slab Jacobian}
For Navier--Stokes, we apply the (state-dependent) slab Jacobian
$\boldsymbol{\mathcal J}_n(\boldsymbol U_n)$ from~\eqref{eq:JacobianLocal0}--\eqref{eq:JacobianLocal1}
in a matrix-free fashion.
We fully reuse the slab block notation~\eqref{eq:DefSubvec_0}.
The only state dependence enters through the velocity-velocity block
$\boldsymbol{\mathcal J}_n^{1,1}(\boldsymbol U_n)$, and, by~\eqref{eq:DefHnp},
this dependence is \emph{block diagonal in time}. Consequently, all state-dependent
spatial kernels are evaluated independently for each temporal basis index and are
then combined by dense multiplications with the temporal matrices
$\boldsymbol K_n^\tau,\boldsymbol M_n^\tau$ from~\eqref{eq:DefKMC}.

Concretely, given an increment $(\boldsymbol V_n,\boldsymbol P_n)$ with subvectors 
$\boldsymbol V_n^{a}\in\mathbb R^{M^{\boldsymbol v}}$, $\boldsymbol P_n^{a}\in\mathbb R^{M^p}$
($a=1,\dots,k+1$), the product
$(\boldsymbol Y_{n,\boldsymbol v},\boldsymbol Y_{n,p})=\boldsymbol{\mathcal J}_n(\boldsymbol U_n)\,(\boldsymbol V_n,\boldsymbol P_n)$
is computed as follows:
\begin{enumerate}[leftmargin=*,itemsep=2pt,topsep=2pt]
  \item \emph{Column-wise spatial products (matrix-free).}
  For each $a=1,\dots,k+1$, compute the spatial actions that appear in~\eqref{eq:JacobianLocal2}--\eqref{eq:JacobianLocal3},
  \[
    \boldsymbol q^{(a)} \coloneq \boldsymbol M_h\,\boldsymbol V_n^{a},\qquad
    \boldsymbol d^{(a)} \coloneq \boldsymbol B\,\boldsymbol V_n^{a},\qquad
    \boldsymbol g^{(a)} \coloneq \boldsymbol B^\top\,\boldsymbol P_n^{a},
  \]
  and evaluate the state-dependent velocity contribution of
  $\boldsymbol{\mathcal J}_n^{1,1}(\boldsymbol U_n)$ at the same temporal block,
  i.e.,
  \[
    \boldsymbol w^{(a)} \coloneq {\Bigl[\boldsymbol{\mathcal J}_n^{1,1}(\boldsymbol U_n)\Bigr]}^{a,a}\,\boldsymbol V_n^{a},
  \]
  where this ``diagonal block'' comprises the viscous part, linearized convection,
  and boundary terms (e.g.\ Nitsche/outflow), with coefficients evaluated on-the-fly
  from the current Newton state $\boldsymbol U_n$.

  \item \emph{Dense temporal mixing.}
  Accumulate for each row index $i=1,\dots,k+1$:
  \begin{align*}
    \boldsymbol Y_{n,\boldsymbol v}^{\,i}
    \mathrel{+}= \sum_{a=1}^{k+1} (\boldsymbol K_n^\tau)_{i a}\,\boldsymbol q^{(a)}
                 + \sum_{a=1}^{k+1} (\boldsymbol M_n^\tau)_{i a}\,\bigl(\boldsymbol w^{(a)}+\boldsymbol g^{(a)}\bigr),\qquad
    \boldsymbol Y_{n,p}^{\,i}
    \mathrel{+}= \sum_{a=1}^{k+1} (\boldsymbol M_n^\tau)_{i a}\,\boldsymbol d^{(a)}.
  \end{align*}
\end{enumerate}
The coupling induced by the jump matrix $\boldsymbol C_n^\tau$ (see~\eqref{eq:DefC})
is implemented as a separate, purely temporal update with $\boldsymbol M_h$.
The midpoint surrogate is \emph{only} used inside the Vanka smoother to enable reuse
of patch factorizations and does not modify the outer Jacobian application.

\section{\label{sec:experiments}Numerical experiments}
We assess the proposed monolithic Navier-Stokes Newton-Krylov solver in the
fully discrete setting of \Cref{sec:stfem}, using spatial
$\mathbb{Q}_{r+1}/\mathbb{P}_r^{\mathrm{disc}}$ pairs and a DG$(k)$
discretization in time. The nonlinear systems are solved by the inexact
Newton-Krylov method of \Cref{sec:nonlinear-solver} (Eisenstat-Walker
forcing, Armijo backtracking), where FGMRES is preconditioned by a single
$V$-cycle of the \(hp\) space-time multigrid (STMG) with Vanka smoothing
(\Cref{sec:mg-framework}).

Our primary goal is robustness of both the \emph{outer} Newton iterations and
the \emph{inner} linear iterations with respect to mesh size \(h\), polynomial
degree \(p\), and moderate Reynolds numbers. We assess robustness by (i) the
total number of Newton steps and (ii) the average number of FGMRES iterations
per Newton step. Specifically, we call the method \textbf{\(h\)/\(p\)-robust in
Newton} if the number of Newton steps required to meet \eqref{eq:inexact}
remains essentially bounded as \(h\!\downarrow 0\) and \(p\!\uparrow\), and
\textbf{\(h\)/\(p\)-robust in FGMRES} if the FGMRES iteration count \emph{per
Newton step} remains essentially bounded under the same refinement. We call it
\textbf{\(\mathrm{Re}\)-robust} if, over the target range
\(\mathrm{Re}\in[1,10^4]\), both Newton and FGMRES iteration counts remain
controlled. We report the average number of Newton steps $\overline{n}_{\text{NL}}$ and the average FGMRES iterations per Newton step $\overline{n}_{\text{L}}$ versus~\(\nu\). Such robustness is crucial for linear
complexity in the number of global degrees of freedom and mitigates memory
pressure from the Arnoldi basis in FGMRES. In our experiments, the method
remains within memory limits.

All tests were executed on the HSUper cluster (Helmut Schmidt University) with
571 nodes, each with two Intel Xeon Platinum~8360Y CPUs (36 cores per CPU) and
\SI[scientific-notation=false,round-precision=0]{256}{\giga\byte} RAM. In our
experiments the number of MPI processes always match the physical cores. The source code is available~\cite{margenberg_monolithic_2026}.
Unless stated otherwise, we use the following setting for the Newton. The
relative and absolute residual tolerances for Newton are $10^{-8}$ and
$10^{-12}$. The parameters for the Armijo line search are an initial step of
$\lambda_0=1$, an Armijo constant $c=10^{-4}$, a backtracking factor $\tau=0.5$,
at most $5$ backtracks, and a minimum step $\alpha_{\min}=10^{-3}$. We employ
the nonmonotone variant with window size $M=5$. For the \emph{Eisenstat-Walker
  forcing} we choose the initial $\eta$ as $\eta_{0}=0.4$
$\eta\in [\eta_{\min},\,\eta_{\max}]$ with $\eta_{\min}=10^{-3}$ and
$\eta_{\max}=0.8$, and set $c_\eta=0.5$ and $\theta=1.5$. The FGMRES solver is
steered completely by the Eisenstat-Walker forcing. However, we impose a limit
of 50 iterations which we do not hit in any of our experiments. The penalty parameters for Nitsche's method must be sufficiently large for correct enforcement of the boundary conditions. Here, we choose $\gamma_1=\gamma_2=10$.
\subsection{\label{sec:conv-ns}Convergence test}
\begin{table}[htb]
  \caption{Errors for
    $\mathbb{Q}_{r+1}^2/\mathbb{P}_{r}^{\mathrm{disc}}/\mathrm{DG}(r)$
    discretizations of the Navier-Stokes system
    for~\eqref{eq:conv-test-v} with $\nu=10^{-2}$}\label{tab:conv-ns}
  \vspace*{-2ex}
  \begin{subcaptionblock}{\textwidth}
    \caption{Calculated velocity and pressure errors in the space-time $L^2$-norm with eoc.}
    \setlength{\tabcolsep}{7pt} \centering \footnotesize
    \begin{tabular}{l|llll|llll}
      \toprule
      &\multicolumn{4}{c|}{$r=3$}&\multicolumn{4}{c}{$r=4$}\\
      $h$ & $e^{\boldsymbol v}_{L^2(L^2)}$     &eoc&$e^{p}_{L^2(L^2)}$&eoc&$e^{\boldsymbol v}_{L^2(L^2)}$ &eoc&$e^{p}_{L^2(L^2)}$&eoc\\
      \midrule
      ${2}^{-1}$ &\num{2.79971e-02} &   - &\num{1.34061e-02}&   - &\num{1.72931e-03} &   - &\num{1.04883e-03} &    -\\
      ${2}^{-2} $&\num{1.27387e-03} &4.46 &\num{7.37713e-04}&4.18 &\num{1.76103e-04} &3.30 &\num{1.15068e-04} & 3.19\\
      ${2}^{-3} $&\num{4.92974e-05} &4.69 &\num{4.78427e-05}&3.95 &\num{3.26625e-06} &5.75 &\num{3.63770e-06} & 4.98\\
      ${2}^{-4}$& \num{1.62100e-06} &4.93 &\num{2.98764e-06}&4.00 &\num{5.38812e-08} &5.92 &\num{1.13359e-07} & 5.00\\
      ${2}^{-5}$& \num{5.12810e-08} &4.98 &\num{1.86291e-07}&4.00 &\num{8.54891e-10} &5.98 &\num{3.52747e-09} & 5.01\\
      \bottomrule
    \end{tabular}
  \end{subcaptionblock}

  \vspace*{-1ex}
  \begin{subcaptionblock}{\textwidth}
    \caption{Calculated velocity errors in the space-time $L^2(H^{1})$-norm and divergence with eoc.}
    \setlength{\tabcolsep}{7pt} \centering\footnotesize
    \begin{tabular}{l|llll|llll}
      \toprule
      &\multicolumn{4}{c|}{$r=3$}&\multicolumn{4}{c}{$r=4$}\\
      $h$ & $e^{\boldsymbol v}_{L^2(H^1)}$     &eoc&$e^{\boldsymbol{\nabla \cdot v}}_{L^2(L^2)}$&eoc&$e^{\boldsymbol v}_{L^2(H^1)}$ &eoc&$e^{\boldsymbol{\nabla \cdot v}}_{L^2(L^2)}$&eoc\\
      \midrule
      ${2}^{-1}$ &\num{6.945345e-01}&  - &\num{5.8049e-01} &   - &\num{5.230669e-02}&   - &\num{4.7054e-02}&     - \\
      ${2}^{-2}$ &\num{5.483490e-02}&3.66&\num{4.9633e-02} &3.55 &\num{8.947813e-03}&2.55 &\num{8.3109e-03}&  2.50 \\
      ${2}^{-3}$ &\num{4.003204e-03}&3.78&\num{3.8147e-03} &3.70 &\num{3.248534e-04}&4.78 &\num{3.1410e-04}&  4.73 \\
      ${2}^{-4}$ &\num{2.539962e-04}&3.98&\num{2.4741e-04} &3.95 &\num{1.070734e-05}&4.92 &\num{1.0484e-05}&  4.90 \\
      ${2}^{-5}$ &\num{1.588098e-05}&4.00&\num{1.5565e-05} &3.99 &\num{3.399991e-07}&4.98 &\num{3.3442e-07}&  4.97 \\
      \bottomrule
    \end{tabular}
  \end{subcaptionblock}
\end{table}
\begin{table}[htb]
  \centering
  \caption{Average number of Newton
    iterations $\overline{n}_{\text{NL}}$ per timestep (left) and average number of FGMRES iterations
    per Newton step $\overline{n}_{\text{L}}$ (right) until convergence for
    polynomial degrees $r$ and number of refinements $c$ with  \(\mathbb{Q}_{r+1}^2/\mathbb{P}_{r}^{\text{disc}}/\text{DG}(r)\)
    discretization.}\label{tab:iter-ns}
  \vspace*{-2ex}
  \begin{subcaptionblock}{\textwidth}
    \setlength{\tabcolsep}{5.5pt}
    \caption{Results for $\nu=10^{-2}$, i.\,e.\ $\mathrm{Re}=100$.}
  \begin{minipage}{0.47\textwidth}
      \centering\scriptsize
      \begin{tabular}{ccccccc}
        \toprule
        \diagbox[innerleftsep=0pt,innerrightsep=0pt, height=1.5em, width=1.5em]{$r$}{$c$}  &  1  &  2  &  3  &  4  &  5  &  6  \\
        \midrule
        1& 6.00 &5.25 &4.75 &4.00 &4.00 &3.34\\
        2& 6.00 &5.00 &4.00 &3.09 &3.00 &3.00\\
        3& 7.00 &5.00 &4.18 &4.00 &4.00 &4.00\\
        4& 6.75 &5.00 &4.00 &4.00 &4.00 &4.00\\
        5& 6.75 &5.00 &4.00 &3.72 &3.00 &3.00\\
        6& 6.75 &4.75 &3.81 &3.00 &4.00 &4.00\\
        \bottomrule
      \end{tabular}
  \end{minipage}
  \hspace{.3cm}
  \begin{minipage}{0.47\textwidth}
    \centering\scriptsize
      \begin{tabular}{ccccccc}
        \toprule
        \diagbox[innerleftsep=0pt,innerrightsep=0pt, height=1.5em, width=1.5em]{$r$}{$c$} &  1  &  2  &  3  &  4  &  5  &  6  \\
        \midrule
        1&  5.71& 4.79& 3.83& 3.75& 3.00& 2.60\\
        2&  5.96& 4.55& 3.86& 3.05& 2.91& 2.58\\
        3&  7.43& 6.85& 4.96& 4.31& 3.23& 4.61\\
        4&  8.04& 6.35& 4.50& 3.25& 2.50& 2.00\\
        5& 10.56& 7.30& 6.14& 4.19& 4.00& 3.19\\
        6& 11.11& 8.21& 5.16& 4.64& 3.00& 2.74\\
        \bottomrule
      \end{tabular}
    \end{minipage}
  \end{subcaptionblock}
  \begin{subcaptionblock}{\textwidth}
    \setlength{\tabcolsep}{5.5pt}
    \caption{Results for $\nu=10^{-4}$, i.\,e.\ $\mathrm{Re}=1\cdot10^{3}$.}
    \begin{minipage}{0.47\textwidth}
      \centering\scriptsize
      \begin{tabular}{ccccccc}
        \toprule
        \diagbox[innerleftsep=0pt,innerrightsep=0pt, height=1.5em, width=1.5em]{$r$}{$c$}  &  1  &  2  &  3  &  4  &  5  &  6  \\
        \midrule
        1 &5.75 &5.88 &5.00 &4.00 &4.00 &4.00\\
        2 &6.50 &5.88 &4.38 &4.00 &3.00 &3.00\\
        3 &7.25 &5.75 &4.00 &4.00 &4.00 &3.93\\
        4 &7.00 &5.75 &4.68 &3.97 &3.34 &3.30\\
        5 &7.75 &5.75 &4.88 &4.00 &3.95 &3.25\\
        6 &7.75 &5.75 &4.00 &3.94 &3.00 &3.00\\
        \bottomrule
      \end{tabular}
  \end{minipage}
  \hspace{.3cm}
  \begin{minipage}{0.47\textwidth}
    \centering\scriptsize
      \begin{tabular}{ccccccc}
        \toprule
        \diagbox[innerleftsep=0pt,innerrightsep=0pt, height=1.5em, width=1.5em]{$r$}{$c$} &  1  &  2  &  3  &  4  &  5  &  6  \\
        \midrule
        1 & 6.65 &5.17 &4.51 &4.00 &3.27 &2.62\\
        2 & 5.96 &4.85 &4.23 &4.22 &3.66 &2.86\\
        3 & 8.48 &6.59 &6.89 &5.41 &4.16 &2.76\\
        4 & 7.75 &6.22 &5.07 &4.76 &4.01 &3.04\\
        5 & 9.97 &7.09 &6.21 &5.95 &4.60 &3.79\\
        6 &10.03 &7.09 &6.31 &5.33 &5.04 &4.00\\
        \bottomrule
      \end{tabular}
    \end{minipage}
  \end{subcaptionblock}
  \begin{subcaptionblock}{\textwidth}
    \setlength{\tabcolsep}{5.5pt}
    \caption{Results for $\nu=10^{-4}$, i.\,e.\ $\mathrm{Re}=1\cdot10^{4}$.}
    \begin{minipage}{0.47\textwidth}
      \centering\scriptsize
      \begin{tabular}{ccccccc}
        \toprule
        \diagbox[innerleftsep=0pt,innerrightsep=0pt, height=1.5em, width=1.5em]{$r$}{$c$}  &  1  &  2  &  3  &  4  &  5  &  6  \\
        \midrule
        1 &6.50 & 6.38 & 5.00 & 4.00 & 4.00 & 4.00\\
        2 &7.38 & 6.88 & 5.00 & 4.00 & 3.00 & 3.00\\
        3 &7.75 & 7.00 & 4.00 & 4.00 & 3.09 & 3.00\\
        4 &8.25 & 7.13 & 5.00 & 4.00 & 3.97 & 3.63\\
        5 &8.50 & 7.50 & 4.94 & 4.00 & 3.83 & 3.42\\
        6 &8.50 & 7.50 & 4.81 & 4.00 & 3.69 & 3.59\\
        \bottomrule
      \end{tabular}
  \end{minipage}
  \hspace{.3cm}
  \begin{minipage}{0.47\textwidth}
    \centering\scriptsize
      \begin{tabular}{ccccccc}
        \toprule
        \diagbox[innerleftsep=0pt,innerrightsep=0pt, height=1.5em, width=1.5em]{$r$}{$c$} &  1  &  2  &  3  &  4  &  5  &  6  \\
        \midrule
        1 &6.35 & 4.93 & 4.78 & 4.11 & 3.39 & 2.79\\
        2 &5.75 & 4.43 & 4.19 & 4.52 & 4.85 & 4.28\\
        3 &7.30 & 6.95 & 6.63 & 5.88 & 5.35 & 4.75\\
        4 &6.80 & 6.25 & 5.30 & 5.04 & 4.15 & 3.50\\
        5 &8.35 & 7.00 & 6.61 & 6.14 & 5.25 & 4.44\\
        6 &9.75 & 7.33 & 6.30 & 5.59 & 5.06 & 4.22\\
        \bottomrule
      \end{tabular}
    \end{minipage}
\end{subcaptionblock}
\end{table}
As a first test case, we consider a model problem on the space-time domain $\Omega\times I = [0,1]^2\times [0, 1]$ with prescribed solution given for the velocity $\boldsymbol v \colon \Omega\times I \to \mathbb{R}^2$ and pressure
$p \colon \Omega\times I \to \mathbb{R}$ by
\begin{subequations}\label{eq:conv-test}
\begin{align}
  \label{eq:conv-test-v}
  \boldsymbol v(\mathbf{x},\,t) &= \sin(t) \begin{pmatrix} \sin^2(\pi x) \sin(\pi y) \cos(\pi y) \\
                                        \sin(\pi x) \cos(\pi x) \sin^2(\pi y) \end{pmatrix},\\
\label{eq:conv-test-p} p(\mathbf{x},\,t) &= \sin(t) \sin(\pi x) \cos(\pi x)
  \sin(\pi y) \cos(\pi y)\,.
\end{align}
\end{subequations}
We choose the kinematic viscosity as $\nu \in \{10^{-2}, 10^{-4}, 10^{-4}\}$ and choose
the external force $\mathbf{f}$ such that the solution~\eqref{eq:conv-test}
satisfies~\eqref{eq:NSE}. The initial velocity is
prescribed as zero and homogeneous Dirichlet boundary conditions are imposed on
$\partial\Omega$ for all times
\[
  \boldsymbol v = \mathbf{0}\text{ on }\Omega\times \{0\},\quad
  \boldsymbol v = \mathbf{0}, \text{ on } \partial \Omega\times (0, T]\,.
\]
The space-time mesh $\mathcal{T}_{h}\otimes\mathcal{M}_{\tau}$ is a uniform
triangulation of the space-time domain $\Omega\times I$. We use discretizations
with varying polynomial degrees $r \in \{1,\, 3,\dots,\,6\}$ in space and
$k=r$ in time to test the convergence.

Table~\ref{tab:conv-ns} shows the findings of our convergence study for
$r\in\{3,4\}$. The expected orders of convergence agree with the experimental
rates. A complete set of results is provided in \Cref{fig:conv-ns} in
Appendix~\ref{Sec:CP}. The $L^2(0,T;L^2(\Omega)^d)$ velocity error does not
always attain the ideal rate $r+2$ due to the temporal polynomial degree $k$,
whereas the $L^2(0,T;H^1(\Omega)^d)$ norm consistently exhibits the optimal
order $r+1$, which supports the choice $k=r$ in time for the tests below.

Table~\ref{tab:iter-ns} reports the nonlinear and linear iteration counts for
these experiments: the number of Newton steps, and the average FGMRES iterations
per Newton step when preconditioned by a single $V$-cycle \(hp\)~STMG. The method
shows excellent $h$- and $p$-robustness of both the Newton method and the inner
Krylov solver. As the viscosity decreases (moderately increasing the Reynolds
number), we observe no significant growth in outer Newton iterations and inner
FGMRES iterations, indicating good $\mathrm{Re}$-robustness of the solver {in the studied range}. The
Armijo globalization and Eisenstat-Walker forcing are effective. We note that the rebuild of the Vanka-smoother was always triggered by slow Newton convergence, and the linear solver was not affected by the inexact smoother.

While polynomial coarsening ($r\to r-1$) can reduce FGMRES
iterations, wall-clock gains may be limited by slowly shrinking local block
sizes; we therefore use degree halving ($r\to r/2$), as also
advocated in our previous work~\cite{MaMuBa25}.
We use a single smoothing step on all levels, i.e., $\nu_1=\nu_2=1$.
Additional smoothing can reduce the number of FGMRES iterations and may
decrease the need for smoother rebuilds. In our computations, however, we
typically rebuild the smoother only once per time step, so larger
$\nu_1,\nu_2$ would increase the cost of each smoother application; see
\eqref{eq:LVS}. Keeping the number of smoothing steps small is therefore critical
for overall performance (cf.~\cite{margenbergSpaceTimeMultigridMethod2024a}). In
the present section, we achieve excellent $h$-, $p$-, and
$\mathrm{Re}$-robustness. We revisit this trade-off in the next section on
large-scale simulations and assess whether increased smoothing improves
robustness in practice.

\subsection{Lid-driven cavity flow}
\begin{table}[htb]
  \centering
  \caption{Discretization sizes for refinement levels $c$ and polynomial degrees $r$:
  number of global space-time elements (\# st-elements) and total number of unknowns $N_{\text{dof}}$.}
  \label{tab:st-sizes}
  \footnotesize
  \sisetup{scientific-notation=true, round-precision=3}
  \begin{tabular}{l S[table-format=1.3e2] | S[table-format=1.3e2]  S[table-format=1.3e2]  S[table-format=1.3e2] }
    \toprule
    & & \multicolumn{3}{c}{$N_{\text{dof}}$}\\
    \mc{$c$} & \multicolumn{1}{c|}{\# st-elements} & \mc{$r=2$} & \mc{$r=3$} & \mc{$r=4$}\\
    \midrule
    4 & 1048576     & 302520576      & 927534080       & 2252187807\\
    5 & 16777216    & 4708913664     & 14531434496     & 36035004919\\
    6 & 268435456   & 74307412992    & 236159008240    & 576560078710\\
    7 & 4294967296  & 1180701050880  & 3778544131834   & 9224961259361\\
    \bottomrule
  \end{tabular}
\end{table}
\begin{table}[htb]
  \centering
  \caption{Iteration counts for the nonlinear solve for different numbers of smoothing steps $n_{\text{sm}}$
    and kinematic viscosities $\nu$: average number of Newton iterations $\overline{n}_{\text{NL}}$ and
    average number of FGMRES iterations per Newton step $\overline{n}_{\text{L}}$ for polynomial degrees $r$
    and refinements $c$. Problem sizes (\# st-elements, $N_{\text{dof}}$) are given in Table~\ref{tab:st-sizes}.}
  \label{tab:iters-ns}
  \footnotesize
  \sisetup{round-precision=2}
  \begin{subcaptionblock}{\textwidth}\setlength{\tabcolsep}{5.5pt}
  \centering
  \caption{Linear and nonlinear iteration counts for $\nu=4\cdot 10^{-4}$ and $n_{\text{sm}}=\nu_1=\nu_2\in\{1,2\}$ (left/right)}
  \begin{tabular}{l | S[table-format=1.2] S[table-format=1.2] | S[table-format=1.2] S[table-format=1.2] | S[table-format=1.2] S[table-format=1.2]}
    \toprule
    & \multicolumn{2}{c|}{$r=2$} & \multicolumn{2}{c|}{$r=3$} & \multicolumn{2}{c}{$r=4$}\\
    \multicolumn{1}{c|}{$c$} & \mc{$\overline{n}_{\text{NL}}$} & \multicolumn{1}{c|}{$\overline{n}_{\text{L}}$}
            & \mc{$\overline{n}_{\text{NL}}$} & \multicolumn{1}{c|}{$\overline{n}_{\text{L}}$}
            & \mc{$\overline{n}_{\text{NL}}$} & \mc{$\overline{n}_{\text{L}}$}\\
    \midrule
    4 & 5.00 & 2.61 & 5.00 & 3.42 & 5.11 & 3.55\\
    5 & 5.00 & 2.15 & 5.00 & 2.83 & 4.99 & 2.95\\
    6 & 4.99 & 1.91 & 4.96 & 2.17 & 4.99 & 2.88\\
    7 & 4.98 & 1.90 & 4.93 & 2.11 & 4.96 & 2.58\\
    \bottomrule
  \end{tabular}
  \hfill
  \begin{tabular}{l | S[table-format=1.2] S[table-format=1.2] | S[table-format=1.2] S[table-format=1.2] | S[table-format=1.2] S[table-format=1.2]}
    \toprule
    & \multicolumn{2}{c|}{$r=2$} & \multicolumn{2}{c|}{$r=3$} & \multicolumn{2}{c}{$r=4$}\\
    \multicolumn{1}{c|}{$c$} & \mc{$\overline{n}_{\text{NL}}$} & \multicolumn{1}{c|}{$\overline{n}_{\text{L}}$}
            & \mc{$\overline{n}_{\text{NL}}$} & \multicolumn{1}{c|}{$\overline{n}_{\text{L}}$}
            & \mc{$\overline{n}_{\text{NL}}$} & \mc{$\overline{n}_{\text{L}}$}\\
    \midrule
    4 & 5.00 & 1.66 & 5.00 & 2.08 & 4.99 & 2.51\\
    5 & 5.00 & 1.52 & 4.99 & 1.71 & 4.94 & 1.76\\
    6 & 4.96 & 1.38 & 4.95 & 1.57 & 4.89 & 1.72\\
    7 & 4.90 & 1.26 & 4.88 & 1.47 & 4.81 & 1.70\\
    \bottomrule
  \end{tabular}
  \end{subcaptionblock}

  \begin{subcaptionblock}{\textwidth}\setlength{\tabcolsep}{5.5pt}
  \centering
  \caption{Linear and nonlinear iteration counts for $\nu=2\cdot 10^{-4}$ and $n_{\text{sm}}=\nu_1=\nu_2\in\{1,2\}$ (left/right)}
  \begin{tabular}{l | S[table-format=1.2] S[table-format=1.2] | S[table-format=1.2] S[table-format=1.2] | S[table-format=1.2] S[table-format=1.2]}
    \toprule
    & \multicolumn{2}{c|}{$r=2$} & \multicolumn{2}{c|}{$r=3$} & \multicolumn{2}{c}{$r=4$}\\
    \multicolumn{1}{c|}{$c$} & \mc{$\overline{n}_{\text{NL}}$} & \multicolumn{1}{c|}{$\overline{n}_{\text{L}}$}
            & \mc{$\overline{n}_{\text{NL}}$} & \multicolumn{1}{c|}{$\overline{n}_{\text{L}}$}
            & \mc{$\overline{n}_{\text{NL}}$} & \mc{$\overline{n}_{\text{L}}$}\\
    \midrule
    4 & 4.99 & 2.65 & 4.99 & 3.47 & 5.09 & 3.60\\
    5 & 4.99 & 2.18 & 4.99 & 2.87 & 4.99 & 2.99\\
    6 & 4.98 & 1.94 & 4.96 & 2.20 & 4.99 & 2.92\\
    7 & 4.98 & 1.93 & 4.93 & 2.14 & 4.96 & 2.62\\
    \bottomrule
  \end{tabular}
  \hfill
  \begin{tabular}{l | S[table-format=1.2] S[table-format=1.2] | S[table-format=1.2] S[table-format=1.2] | S[table-format=1.2] S[table-format=1.2]}
    \toprule
    & \multicolumn{2}{c|}{$r=2$} & \multicolumn{2}{c|}{$r=3$} & \multicolumn{2}{c}{$r=4$}\\
    \multicolumn{1}{c|}{$c$} & \mc{$\overline{n}_{\text{NL}}$} & \multicolumn{1}{c|}{$\overline{n}_{\text{L}}$}
            & \mc{$\overline{n}_{\text{NL}}$} & \multicolumn{1}{c|}{$\overline{n}_{\text{L}}$}
            & \mc{$\overline{n}_{\text{NL}}$} & \mc{$\overline{n}_{\text{L}}$}\\
    \midrule
    4 & 4.99 & 1.67 & 4.99 & 2.10 & 4.99 & 2.59\\
    5 & 4.99 & 1.53 & 4.99 & 1.69 & 4.93 & 1.77\\
    6 & 4.97 & 1.35 & 4.95 & 1.51 & 4.89 & 1.70\\
    7 & 4.89 & 1.25 & 4.86 & 1.42 & 4.77 & 1.63\\
    \bottomrule
  \end{tabular}
  \end{subcaptionblock}
\end{table}
\begin{table}[htb]
  \centering
  \caption{Throughput $\theta$ and wall time for different values of $\nu$, $n_{\text{sm}}$, $r$ and $c$.
  Problem sizes (\# st-elements, $N_{\text{dof}}$) are reported in Table~\ref{tab:st-sizes}.}
  \label{tab:lid-throughput-stokes}
  \footnotesize
\begin{subcaptionblock}{\textwidth}
  \centering
  \caption{Throughput for $\nu=4\cdot 10^{-4}$ and $n_{\text{sm}}=\nu_1=\nu_2\in\{1,2\}$ (left/right)}
  \sisetup{round-precision=3}
  \begin{tabular}{lS[table-format=2.3e2]S[table-format=2.3e2]S[table-format=2.3e2]}
    \toprule
    \mc{$c$} & \mc{$r=2$} & \mc{$r=3$} & \mc{$r=4$}\\
    \midrule
    4 & 1200478.4762  & 986738.3830   & 800066.7165\\
    5 & 3293793.9251  & 2750421.7358  & 2049772.7485\\
    6 & 11422483.2527 & 7909755.3575  & 6199570.7388\\
    7 & 35778819.7236 & 25530703.5935 & 17999924.4085\\
    \bottomrule
  \end{tabular}
\hfill
  \begin{tabular}{lS[table-format=2.3e2]S[table-format=2.3e2]S[table-format=2.3e2]}
    \toprule
    \mc{$c$} & \mc{$r=2$} & \mc{$r=3$} & \mc{$r=4$}\\
    \midrule
    4 & 1080430.6286  & 900518.5243   & 662408.1785\\
    5 & 3092989.1967  & 2892403.3631  & 1715952.6152\\
    6 & 10129663.1485 & 6320292.1776  & 5147857.8456\\
    7 & 28111929.7829 & 18892720.6592 & 14192248.0913\\
    \bottomrule
  \end{tabular}
\end{subcaptionblock}

\begin{subcaptionblock}{\textwidth}
  \centering
  \caption{Throughput for $\nu=2\cdot 10^{-4}$ and $n_{\text{sm}}=\nu_1=\nu_2\in\{1,2\}$ (left/right)}
  \sisetup{round-precision=3}
  \begin{tabular}{lS[table-format=2.3e2]S[table-format=2.3e2]S[table-format=2.3e2]}
    \toprule
    \mc{$c$} & \mc{$r=2$} & \mc{$r=3$} & \mc{$r=4$}\\
    \midrule
    4 & 1311109.7137  & 1077672.1986  & 873797.6268\\
    5 & 3597336.6417  & 3003889.4707  & 2238671.5086\\
    6 & 12560414.6369 & 8697741.5303  & 6817184.7861\\
    7 & 39343179.6751 & 28074125.0402 & 19793114.0718\\
    \bottomrule
  \end{tabular}
\hfill
  \begin{tabular}{lS[table-format=2.3e2]S[table-format=2.3e2]S[table-format=2.3e2]}
    \toprule
    \mc{$c$} & \mc{$r=2$} & \mc{$r=3$} & \mc{$r=4$}\\
    \midrule
    4 & 1208951.1299  & 1007637.9349  & 741203.6411\\
    5 & 3460909.6457  & 3236463.5186  & 1920070.3849\\
    6 & 12684775.1779 & 7914526.2933  & 6446356.4545\\
    7 & 35202899.0387 & 23658231.3299 & 17772108.8715\\
    \bottomrule
  \end{tabular}
\end{subcaptionblock}

\end{table}

\begin{figure}[htbp]
\includegraphics{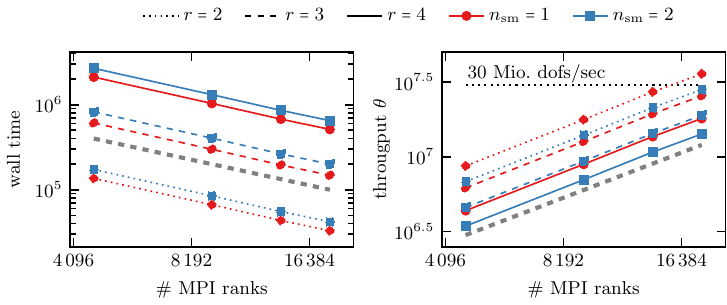}
\caption{\label{fig:lid-strong-scale}Strong scaling test results for the STMG
  algorithm with varying numbers of smoothing steps (anchored to the $c=7$ case at $18432$ MPI ranks, cf.\ Table~\ref{tab:lid-throughput-stokes}).
  The left plot shows the time to solution over the number of MPI processes. The dashed gray lines
  indicate the optimal scaling. The right plot depicts the degrees of freedom (dofs) processed per second
  over the number of MPI processes.}
\end{figure}

\begin{figure}[htbp]
\pgfplotsset{
  /pgfplots/bar cycle list/.style={/pgfplots/cycle  list={%
       {myred,fill=myred,mark=none},%
       {mygreen,fill=mygreen,mark=none},%
       {myblue,fill=myblue,mark=none},%
       {gray!80!black,fill=gray!80!black,mark=none},%
    }
  },
}
\centering
\begin{tikzpicture}[font=\small]
  \begin{axis}[title={$\mathbb{Q}_{2}/\mathbb{P}_{1}^{\text{disc}}/DG(1)$},
    every axis title/.style={at={(0.1,1.25)}},
        xbar stacked,
        width=.99\textwidth,
        height=.2\textwidth, enlarge y limits=0.5, xmin=-1,xmax=101,
        legend style={/tikz/every even column/.append style={column sep=0.3cm},
          at={(0.5,2.2)}, anchor=north,legend columns=-1,draw=none,},
        symbolic y coords={1,2},
        ytick=data,
        ylabel={$n_{\text{sm}}$},
        every node near coord/.style={
          text=white,font={\footnotesize\bfseries\sffamily},check for zero/.code={
            \pgfkeys{/pgf/fpu=true}
            \pgfmathparse{\pgfplotspointmeta-4}
            \pgfmathfloatifflags{\pgfmathresult}{-}{\pgfkeys{/tikz/coordinate}}{}
            \pgfkeys{/pgf/fpu=false}
          },
          check for zero},
        nodes near coords={\pgfmathprintnumber[assume math mode=true,fixed zerofill,precision=1]{\pgfplotspointmeta}}]

    \addplot+[draw=none,bar width=0.4cm,fill=myred]
      plot coordinates {(49.75,2) (55.87,1)};
    \addlegendentry{Rebuild Vanka}

    \addplot+[draw=none,bar width=0.4cm,fill=mygreen]
      plot coordinates {(9.53,2) (7.55,1)};
    \addlegendentry{Apply Vanka}

    \addplot+[draw=none,bar width=0.4cm,fill=myblue]
      plot coordinates {(18.19,2) (19.52,1)};
    \addlegendentry{MG w/o Vanka}

    \addplot+[draw=none,bar width=0.4cm,fill=gray!80!black]
      plot coordinates {(22.53,2) (17.06,1)};
    \addlegendentry{Other}
  \end{axis}
\end{tikzpicture}

\vspace{2mm}

\begin{tikzpicture}[font=\small]
  \begin{axis}[title={$\mathbb{Q}_{3}/\mathbb{P}_{2}^{\text{disc}}/DG(2)$},
        every axis title/.style={at={(0.1,1.25)}},
        xbar stacked,
        width=.99\textwidth,
        height=.2\textwidth, enlarge y limits=0.5, xmin=-1,xmax=101,
        symbolic y coords={1,2},
        ytick=data,
        ylabel={$n_{\text{sm}}$},
        every node near coord/.style={
          text=white,font={\footnotesize\bfseries\sffamily},check for zero/.code={
            \pgfkeys{/pgf/fpu=true}
            \pgfmathparse{\pgfplotspointmeta-4}
            \pgfmathfloatifflags{\pgfmathresult}{-}{\pgfkeys{/tikz/coordinate}}{}
            \pgfkeys{/pgf/fpu=false}
          },
          check for zero},
        nodes near coords={\pgfmathprintnumber[assume math mode=true,fixed zerofill,precision=1]{\pgfplotspointmeta}}]

    \addplot+[draw=none,bar width=0.4cm,fill=myred]
      plot coordinates {(72.32,2) (77.36,1)};

    \addplot+[draw=none,bar width=0.4cm,fill=mygreen]
      plot coordinates {(8.62,2) (6.03,1)};

    \addplot+[draw=none,bar width=0.4cm,fill=myblue]
      plot coordinates {(5.06,2) (1.42,1)};

    \addplot+[draw=none,bar width=0.4cm,fill=gray!80!black]
      plot coordinates {(14.0,2) (15.19,1)};
  \end{axis}
\end{tikzpicture}
\begin{tikzpicture}[font=\small]
  \begin{axis}[title={$\mathbb{Q}_{4}/\mathbb{P}_{3}^{\text{disc}}/DG(3)$},
        every axis title/.style={at={(0.1,1.25)}},
        xbar stacked,
        width=.99\textwidth,
        height=.2\textwidth, enlarge y limits=0.5, xmin=-1,xmax=101,
        symbolic y coords={1,2},
        ytick=data,
        xlabel={Relative Time [\%]},
        ylabel={$n_{\text{sm}}$},
        every node near coord/.style={
          text=white,font={\footnotesize\bfseries\sffamily},check for zero/.code={
            \pgfkeys{/pgf/fpu=true}
            \pgfmathparse{\pgfplotspointmeta-4}
            \pgfmathfloatifflags{\pgfmathresult}{-}{\pgfkeys{/tikz/coordinate}}{}
            \pgfkeys{/pgf/fpu=false}
          },
          check for zero},
        nodes near coords={\pgfmathprintnumber[assume math mode=true,fixed zerofill,precision=1]{\pgfplotspointmeta}}]
    \addplot+[draw=none,bar width=0.4cm] plot coordinates
      {(85.10,1) (79.80,2)};
    \addplot+[draw=none,bar width=0.4cm] plot coordinates
      {(7.90,1) (9.70,2)};
    \addplot+[draw=none,bar width=0.4cm] plot coordinates
      {(4.00,1) (6.00,2)};
    \addplot+[draw=none,bar width=0.4cm] plot coordinates
      {(3.00,1) (4.50,2)};
    \end{axis}
\end{tikzpicture}
\caption{\label{fig:lid-strong-scale-rel}Relative time spent in dominant parts of
  the Navier-Stokes solve on $\num[scientific-notation=false,round-precision=0]{18432}$ MPI ranks.
  The cell-wise Vanka smoother is dominated by the expensive
  rebuild and application of the Vanka smoother.
  Data shown for $c=7$, $r\in\{2,\,3,\,4\}$, $n_{\text{sm}}\in\{1,\,2\}$.}
\end{figure}
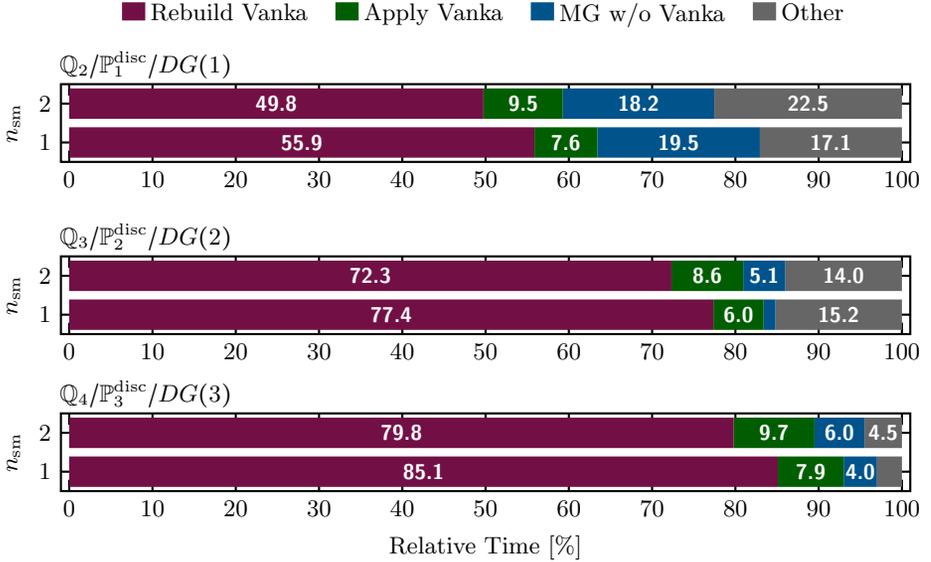
We now study the benchmark problem of lid-driven cavity flow. The space-time mesh
$\mathcal{T}_{h}\times\mathcal{M}_{\tau}$ is a uniform triangulation of the
space-time domain $\Omega\times I=[0,\,1]^3\times [0,\,8]$, refined globally $c$
times. A Dirichlet profile $\mathbf v_{D}$ is prescribed at the upper boundary
$\Gamma_{D}=[0,\,1]^{2}\times \{1\}\subset \partial \Omega$ by
\[
\mathbf v_{D}(x,\,y,\,z,\,t)=\sin\!\left(\tfrac{\pi}{4} t\right)
\quad\text{on }\Gamma_{D}\times [0,\,8],
\]
and no-slip conditions are imposed on
$\Gamma_{\text{wall}}=\partial \Omega \setminus\Gamma_{D}$. We consider
$\nu \in \{2\cdot 10^{-4}, 4\cdot 10^{-4}\}$ and degree
$r\in\{2,\,3,\,4\}$ in space with $k=r$ in time. For the strong scaling test in
\Cref{fig:lid-strong-scale} we fix $c=7$, i.e.\ $\num{2097152}$ space elements
and $\num[scientific-notation=false,round-precision=0]{2048}$ time elements; cf.\ \Cref{tab:st-sizes}.

\paragraph{Iterations and robustness}
The nonlinear solve is stable across all configurations: the average Newton
counts remain essentially constant (about five iterations) for both viscosities
and all degrees, see \Cref{tab:iters-ns}. The average FGMRES iterations per
Newton step are likewise well controlled and decrease mildly with refinement,
indicating mesh-independent preconditioning. Increasing the smoothing steps from
$n_{\text{sm}}=1$ to $n_{\text{sm}}=2$ consistently reduces linear iterations
for both viscosities and all degrees (most noticeably for larger $r$), i.e.\
additional smoothing improves \(p\)-robustness, although some degree dependence
remains visible in \Cref{tab:iters-ns}.

\paragraph{Time to solution and throughput}
We quantify performance by the throughput
$\theta(n_{\text{sm}},c,r)=N_{\text{dof}}(c,r)/W_{\text{total}}(n_{\text{sm}},c,r)$.
Wall times and throughputs are summarized in \Cref{tab:lid-throughput-stokes}.
For fixed $(\nu,n_{\text{sm}},r)$, the throughput increases strongly with
refinement, reaching the multi-$10^{7}$ dofs/s regime for $r=2$ at $c=7$ (and
correspondingly high absolute values for $r=3,4$). While increasing
$n_{\text{sm}}$ reduces linear iterations (\Cref{tab:iters-ns}), it typically
increases wall time (\Cref{tab:lid-throughput-stokes}) because the additional
Vanka work dominates the cost per iteration. The dependence on $\nu$ is
comparatively mild: iteration counts change only marginally
(\Cref{tab:iters-ns}), and differences in time to solution are primarily
attributable to the shifted balance between setup and application cost in the
preconditioner (\Cref{tab:lid-throughput-stokes}).

\paragraph{Strong scaling and bottlenecks}
\Cref{fig:lid-strong-scale} shows near-optimal strong scaling for $c=7$ with
$n_{\text{sm}}\in\{1,2,4\}$ and $r\in\{2,3,4\}$: throughput grows almost linearly with MPI ranks. The
runtime breakdown in \Cref{fig:lid-strong-scale-rel} explains the remaining
performance limitations: the Vanka smoother dominates, in particular the costly
preconditioner rebuild and the Vanka application, whereas the remainder of the
multigrid work becomes comparatively small. Consequently, the iteration
reductions obtained by increasing $n_{\text{sm}}$ do not translate into shorter
time to solution, and the dominance of rebuild/apply becomes more pronounced at
higher polynomial degree (\Cref{fig:lid-strong-scale-rel},
\Cref{tab:lid-throughput-stokes}). Overall, the \(hp\) STMG method yields robust
iteration counts and excellent scalability (\Cref{tab:iters-ns},
\Cref{fig:lid-strong-scale}), but the method is expensive due to the
computational complexity of the smoother (\Cref{fig:lid-strong-scale-rel}).

\section{\label{sec:conclusions}Conclusions}
We presented a monolithic matrix-free \(hp\) space-time multigrid method
(\(hp\)-STMG) for tensor-product space-time finite element discretizations of
the incompressible Navier--Stokes equations, based on mapped inf-sup stable
pairs \(\mathbb Q_{r+1}/\mathbb P_{r}^{\mathrm{disc}}\) in space and DG\((k)\) in
time. Fully coupled nonlinear systems are solved by Newton--GMRES preconditioned
with \(hp\)-STMG, combining geometric and polynomial coarsening in space and
time. The numerical experiments demonstrate robust nonlinear convergence,
\(h\)- and \(p\)-robust Krylov convergence of the \(hp\)-STMG preconditioner, and
solver performance that remains stable over the investigated range of Reynolds
numbers, while achieving near-optimal strong scaling up to \(\num{18432}\) MPI
ranks and high throughput on problems with more than \(10^{12}\) degrees of
freedom.

The performance analysis identifies the dominant bottleneck: the space-time
cell-wise Vanka smoother. We proposed an effective approximation {via coefficient patch models with single time point evaluation} and an inexact application that preserves the
robust iteration behavior, yet rebuild and apply remain expensive and dominate
time to solution at higher order. The practical results nicely confirm the theory and their outcomes:
{freezing the coefficients of the nonlinear terms on the local Vanka patch by single time point evaluation} introduces a perturbation controlled by the time-step
\(\tau\), so the inexact patch model remains consistent and improves as
\(\tau\to 0\). {Tailored temporal quadrature of the nonlinear terms with inexactness for triple products of discrete functions enables efficient time integration, while implying only higher-order quadrature errors.} It therefore does not reduce
the DG\((k)\) convergence order. Future work should therefore target cheaper smoothing strategies, for example
block-diagonal or approximate factorization variants (e.,g.\ diagonal
Vanka~\cite{john_numerical_2000}), and the incorporation of temporal decoupling
ideas into the local space-time systems (e.g.\ along the lines of
\cite{munchStageParallelFullyImplicit2023}), in order to reduce both compute and
memory costs without compromising robustness.

\section*{Acknowledgments}
Computational resources (HPC cluster HSUper) have been provided by the project
hpc.bw, funded by dtec.bw - Digitalization and Technology Research Center of the
Bundeswehr. dtec.bw is funded by the European Union - NextGenerationEU.

\appendix
\setcounter{table}{0}
\renewcommand{\thetable}{\Alph{section}.\arabic{table}}
\setcounter{figure}{0}
\renewcommand{\thefigure}{\Alph{section}.\arabic{figure}}

\section{Definition of the assembly matrices and vectors}
\label{App:Assembly}

Here we summarize the matrices and vectors that are assembled from the bilinear and linear forms in~\eqref{eq:DNSE}. This is done for the tensor product spaces in~\eqref{eq:GDS} and their bases introduced in~\eqref{eq:BasYk} and~\eqref{eq:BasVhQh}, respectively.

\paragraph{Time matrix assembly}
For the temporal basis induced by~\eqref{eq:BasYk}, we define $\boldsymbol K_n^\tau$, $\boldsymbol M_n^\tau$, $\boldsymbol C_n^\tau\in \R^{k+1,k+1}$ by
\begin{subequations}\label{eq:DefKMC}
  \begin{align}
    \label{eq:DefKM}
    (\boldsymbol K_n^\tau)_{ab} &\coloneq \int_{t_{n-1}}^{t_n} \partial_t\varphi_n^b\,\varphi_n^a\d t + \varphi_n^b(t_{n-1}^+)\,\varphi_n^a(t_{n-1}^+)\,, \quad   (\boldsymbol M_n^\tau)_{ab} \coloneq \int_{t_{n-1}}^{t_n} \varphi_n^b\,\varphi_n^a\d t \,, \\
    \label{eq:DefC}
    (\boldsymbol C_n^\tau)_{ab} &\coloneq \begin{cases}
      \varphi_{n-1}^b(t_{n-1})\,\varphi_n^a(t_{n-1}^+)\,, & \text{for } n>1\,,\\
      \varphi_n^a(t_{n-1}^+)\,\delta_{b,k+1}\,, & \text{for } n=1\,,
    \end{cases}
  \end{align}
\end{subequations}
with the Kronecker symbol $\delta_{\alpha,\beta}$. By the exactness of formula~\eqref{eq:GF} for all polynomials of degree less or equal than $2k$, $\boldsymbol M_n^\tau$ is diagonal with positive entries $w_{n,\mu}\coloneq\tfrac{\tau_n}{2}\hat\omega_\mu$.

\paragraph{Space matrix assembly (bilinear forms)}
Using the bases in~\eqref{eq:BasVhQh}, we define $\boldsymbol M_h$, $\boldsymbol A_h \in\mathbb R^{M^{\boldsymbol v}\times M^{\boldsymbol v}}$, $\boldsymbol B_h \in\mathbb R^{M^p\times M^{\boldsymbol v}}$ and $\boldsymbol M_h^p \in\mathbb R^{M^p\times M^p}$ by
\begin{subequations}\label{eq:DefMAB}
  \begin{align}
    (\boldsymbol M_h)_{ij} &\coloneq \int_\Omega \boldsymbol\chi^{\boldsymbol v}_j\cdot\boldsymbol\chi^{\boldsymbol v}_i\d \boldsymbol x\,, & (\boldsymbol A_h)_{ij} &\coloneq \int_\Omega \boldsymbol\nabla\boldsymbol\chi^{\boldsymbol v}_j\cdot \boldsymbol\nabla\boldsymbol\chi^{\boldsymbol v}_i\d \boldsymbol x\,,\\
    (\boldsymbol B_h)_{\ell j} &\coloneq - \int_\Omega (\boldsymbol\nabla\cdot\boldsymbol\chi^{\boldsymbol v}_j)\,\chi^p_{\ell}\d  \boldsymbol x\,,& (\boldsymbol M_h^p)_{\ell m} &\coloneq \int_\Omega \chi^p_m\,\chi^p_{\ell}\d  \boldsymbol x\,.
  \end{align}
\end{subequations}

\paragraph{Space matrix assembly (boundary terms)}
For the bilinear boundary pairings in~\eqref{eq:def-b-gam} and~\eqref{eq:def-a-gam-tp}, that are due to either the natural boundary conditions arising from integration by parts or the application of Nitsche's method, we define $\boldsymbol G_{\Gamma_D}^{\boldsymbol v} \in\mathbb R^{M^{\boldsymbol v},M^{\boldsymbol v}}$, $\boldsymbol G_{\Gamma_D}^{p} \in\mathbb R^{M^{\boldsymbol v},M^p}$, $\boldsymbol M_{\Gamma_D} \in\mathbb R^{M^{\boldsymbol v}, M^{\boldsymbol v}}$ and $\boldsymbol M_{\Gamma_D}^{\boldsymbol n} \in\mathbb R^{M^{\boldsymbol v},M^{\boldsymbol v}}$ by
\begin{subequations}
  \label{eq:DefNitscheSpatial1}
  \begin{align}
    (\boldsymbol G_{\Gamma_D}^{\boldsymbol v})_{ij} &\coloneq \int_{\Gamma_D} (\partial_n\boldsymbol\chi^{\boldsymbol v}_j)\cdot  \boldsymbol\chi^{\boldsymbol v}_i\d o\,, & (\boldsymbol G_{\Gamma_D}^{p})_{ij} &\coloneq \int_{\Gamma_D} (\boldsymbol \chi^{\boldsymbol v}_j \cdot \boldsymbol n) \boldsymbol \chi_i^p\d o\,,\\
    (\boldsymbol M_{\Gamma_D})_{ij} &\coloneq \int_{\Gamma_D} \boldsymbol\chi^{\boldsymbol v}_j\cdot\boldsymbol\chi^{\boldsymbol v}_i\d o\,, & (\boldsymbol M_{\Gamma_D}^{\boldsymbol n})_{ij} &\coloneq \int_{\Gamma_D} (\boldsymbol\chi^{\boldsymbol v}_j\cdot\boldsymbol  n) \,  (\boldsymbol\chi^{\boldsymbol v}_i\boldsymbol n) \d o\,,
  \end{align}
\end{subequations}
and, finally, $\boldsymbol N^{\boldsymbol v;b,r}_{\Gamma_D}(\gamma)\in\mathbb R^{M^{\boldsymbol v},M^{\boldsymbol v}}$ by
\begin{equation}
  \label{eq:DefNitscheSpatial10}
    \boldsymbol N^{\boldsymbol v;b,r}_{\Gamma_D}(\gamma) \coloneq -\nu\,(\boldsymbol G_{\Gamma_D}^{\boldsymbol v} + (\boldsymbol G_{\Gamma_D}^{\boldsymbol v})^{\top}) + \nu\,\frac{\gamma_1}{h_{\Gamma_D}} \, \boldsymbol M_{\Gamma_D}+ \frac{\gamma_2}{h_{\Gamma_D}} \, \boldsymbol M_{\Gamma_D}^{\boldsymbol n}\,,
\end{equation}

\paragraph{Vectors assembly (linear forms)}

For the linear forms in~\eqref{eq:DNSE} we put
\begin{equation}
  \label{eq:DefFn}
  \boldsymbol F_{n} \coloneq (\boldsymbol F_{n}^{1},\ldots,\boldsymbol F_{n}^{k+1})^\top \in \R^{(k+1)\cdot M^{\boldsymbol v}}\,, \quad \text{with}\; \;
  (\boldsymbol F_{n,}^{a})_i \coloneq Q_n(\langle \boldsymbol f, \varphi_n^a\, \boldsymbol{\chi}_i^{\boldsymbol{v}} \rangle)
\end{equation}
for $a=1,\ldots,k+1$ and $n=1\ldots,N$. Further, we let
\begin{equation}
  \label{eq:DefLn}
  \boldsymbol L_{n} \coloneq (\boldsymbol L_{n}^{1},\ldots,\boldsymbol L_{n}^{k+1})^\top \in \R^{(k+1)\cdot M^{\boldsymbol v}}\,, \quad \text{with}\; \;
  (\boldsymbol L_{n,}^{a})_i \coloneq Q_n(B_\gamma(\boldsymbol g, \varphi_n^a\, \boldsymbol{\chi}_i^{\boldsymbol{v}}))\,.
\end{equation}
In~\eqref{eq:DefFn} and~\eqref{eq:DefLn}, we tacitly assume that the data $\boldsymbol f$ and $\boldsymbol g$ are sufficiently smooth in time functions such that their point-wise evaluation in time is well-defined.

\section{Algorithms}
\label{App:Algorithms}
\SetKwFunction{FGMRES}{FGMRES}
\SetKwFunction{Armijo}{Armijo}
\SetKwFunction{Rebuild}{ShouldRebuild}
\SetKwFunction{UpdSm}{UpdateSmootherFromMidpoint}
\SetKwFunction{Stag}{Stagnates}

\begin{algorithm2e}[H]
\caption{Inexact Newton-Krylov (EW + nonmonotone Armijo)}
\label{alg:ink}
\DontPrintSemicolon
\KwIn{Initial $\boldsymbol X_0$, residual $\mathcal R(\cdot)$, Jacobian $\mathcal J(\cdot)$}
\KwIn{Tolerances $(\mathrm{abs}_{\rm tol},\mathrm{rel}_{\rm tol},\mathrm{maxit})$;\quad EW $(\eta_0,c_\eta,\theta,\eta_{\min},\eta_{\max})$}
\KwIn{Line-search $(\lambda_0,c,\tau,\alpha_{\min},M)$; Rebuild $(\theta_N,\theta_L,\kappa_{\rm abs})$; Stagn.\ $(s,\vartheta)$}
\BlankLine
$\boldsymbol X\gets \boldsymbol X_0$;\ $\boldsymbol r\gets \mathcal R(\boldsymbol X)$;\ $n_0\gets\|\boldsymbol r\|_{\mathcal M}$\;
$n_{\rm prev}\gets n_0$;\ $\eta\gets \eta_0$;\ $\rho_{\rm prev}\gets 1$;\ $\kappa_{\rm prev}\gets 1$;\ $\kappa_{\rm ref}\gets 1$;\ $k\gets 0$\;

\While{$\|\boldsymbol r\|_{\mathcal M}>\max(\mathrm{abs}_{\rm tol},\mathrm{rel}_{\rm tol}\,n_0)$ \textbf{and} $k<\mathrm{maxit}$}{
  \If{$\rho_{\rm prev}\ge \theta_N$ \textbf{or} $\kappa_{\rm prev}\ge \max(\theta_L\,\kappa_{\rm ref},\,\kappa_{\rm abs})$}{
    \UpdSm$(\boldsymbol X)$;\ $\kappa_{\rm ref}\gets \max(1,\kappa_{\rm prev})$
  }
  \lIf{$k>0$}{$\eta\gets \operatorname{clamp}\Big(c_\eta\,\eta\big(\|\boldsymbol r\|_{\mathcal M}/\max(n_{\rm prev},\varepsilon)\big)^\theta,\eta_{\min},\eta_{\max}\Big)$}
  $(\widehat{\boldsymbol X},\kappa,\{\rho_j\})\gets \FGMRES(\mathcal J(\boldsymbol X),-\boldsymbol r;\mathrm{tol}=\eta)$\tcp*[r]{exact Jacobian}
  $\alpha\gets \Armijo(\boldsymbol X,\widehat{\boldsymbol X},\boldsymbol r,\mathcal J(\boldsymbol X);\,\lambda_0,c,\tau,\alpha_{\min},M)$\;
  $\boldsymbol X\gets \boldsymbol X+\alpha\,\widehat{\boldsymbol X}$\;
  $n_{\rm prev}\gets \|\boldsymbol r\|_{\mathcal M}$;\ $\boldsymbol r\gets \mathcal R(\boldsymbol X)$\;
  $\rho_{\rm prev}\gets \|\boldsymbol r\|_{\mathcal M}/\max(n_{\rm prev},\varepsilon)$;\ $\kappa_{\rm prev}\gets \max(1,\kappa)$\;
  \lIf{\Stag$(\{\rho_j\},s,\vartheta)$}{\UpdSm$(\boldsymbol X)$}
  $k\gets k+1$\;
}
\Return $\boldsymbol X$\;
\end{algorithm2e}

\begin{algorithm2e}[H]
\caption{Armijo$(\boldsymbol X,\widehat{\boldsymbol X},\boldsymbol r,\mathcal J;\lambda_0,c,\tau,\alpha_{\min},M)$}
\label{alg:armijo}
\DontPrintSemicolon
\KwIn{$\phi(\boldsymbol Z)=\tfrac12\|\mathcal R(\boldsymbol Z)\|_{\mathcal M}^2$;\quad direction $\widehat{\boldsymbol X}$}
\KwOut{Step $\alpha$}
\BlankLine
$\alpha\gets \lambda_0$;\ $\phi_0\gets \tfrac12\|\boldsymbol r\|_{\mathcal M}^2$;\ $g_0\gets \boldsymbol r^\top \mathcal J\,\widehat{\boldsymbol X}$\;
Initialize nonmonotone window $\mathcal W\gets[\phi_0]$ (keep last $M$) and failures $(\alpha_1,\phi_1),(\alpha_2,\phi_2)\gets\emptyset$\;
\While{$\alpha>\alpha_{\min}$}{
  $\phi_t\gets \tfrac12\|\mathcal R(\boldsymbol X+\alpha\,\widehat{\boldsymbol X})\|_{\mathcal M}^2$;\quad
  $\phi_{\max}\gets (M>0)\ ?\ \max(\mathcal W)\ :\ \phi_0$\;
  \If{$\phi_t\le \phi_{\max}+c\,\alpha\,g_0$}{
    \lIf{$M>0$}{append $\phi_t$ to $\mathcal W$ (keep last $M$)}
    \Return $\alpha$
  }
  Update $(\alpha_1,\phi_1),(\alpha_2,\phi_2)$ with current fail $(\alpha,\phi_t)$\;
  $\alpha\gets \operatorname{clamp}\big(\alpha_{\rm interp},\,0.1\alpha,\,0.5\alpha\big)$;\quad
  \lIf{$\alpha_{\rm interp}\ \text{undefined}$}{$\alpha\gets \tau\,\alpha$}
}
\Return $\alpha_{\min}$\;
\end{algorithm2e}
\section{\label{Sec:CP} Convergence plots} \Cref{fig:conv-ns} shows the
convergence in various norms for all polynomial degrees and refinements
in~\Cref{sec:conv-ns}. The Newton and GMRES iteration counts are given in
Table~\ref{tab:iter-ns}.
\begin{figure}[htb]
  \centering
  \includegraphics{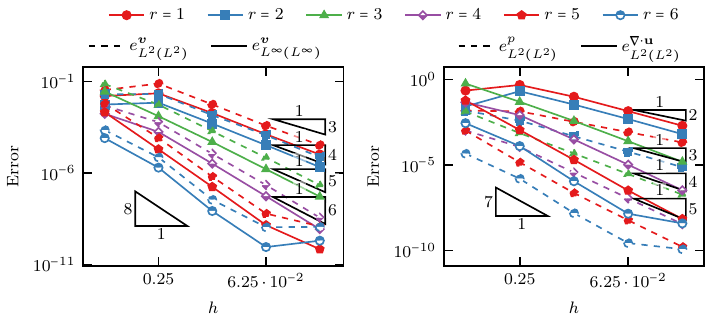}
  \caption{\label{fig:conv-ns}Calculated errors of the velocity and pressure
    in various norms (velocity: $L^2$, $L^{\infty}$ in space-time and the
    $L^2$-norm of the divergence in space-time, pressure: $L^2$ in space-time)
    for different polynomial orders. The expected orders of convergence,
    represented by the triangles, match with the experimental orders.}
\end{figure}
\bibliographystyle{siamplain}
\bibliography{references}
\end{document}